\documentclass[final,reqno]{siamltex}
\usepackage{latexsym,amsmath,amssymb,amsfonts,mathrsfs}
\usepackage{epsf,graphicx,epsfig,color,cite,cases,bm}
\usepackage{subfigure,graphics,multirow,marginnote,graphicx}
\usepackage{enumerate,dsfont}
\usepackage{algorithm}

\sloppy \brokenpenalty=10000

\newcommand{\R}{{\mat R}}

\newcommand{\N}{{\mat N}}

\newcommand{\ds}{\displaystyle}
\newcommand{\no}{\nonumber}
\newcommand{\be}{\begin{eqnarray}}
\newcommand{\ben}{\begin{eqnarray*}}
\newcommand{\en}{\end{eqnarray}}
\newcommand{\enn}{\end{eqnarray*}}

\newcommand{\pa}{\partial}

\newcommand{\ov}{\overline}

\newcommand{\g}{\gamma}
\newcommand{\G}{\Gamma}

\newcommand{\vep}{\varepsilon}
\newcommand{\Om}{\Omega}

\newcommand{\mat}{\mathbb}
\newcommand{\se}{\setminus}

%



\begin{document}
\renewcommand{\theequation}{\arabic{section}.\arabic{equation}}


\title{\bf Near-field imaging of a locally rough interface and buried obstacles with the linear sampling method
}

\author{Jianliang Li\thanks{School of Mathematics and Statistics, Changsha University of Science
and Technology, Changsha, 410114, China ({\tt lijl@amss.ac.cn})}
\and
Jiaqing Yang\thanks{School of Mathematics and Statistics, Xi'an Jiaotong University,
Xi'an, Shaanxi 710049, China ({\tt jiaq.yang@mail.xjtu.edu.cn})}
\and
Bo Zhang\thanks{LSEC, NCMIS and Academy of Mathematics and Systems Science, Chinese Academy
of Sciences, Beijing 100190, China and School of Mathematical Sciences, University of Chinese
Academy of Sciences, Beijing 100049, China ({\tt b.zhang@amt.ac.cn})}
}
\date{}

\maketitle


\begin{abstract}
Consider the problem of inverse scattering of time-harmonic point sources from an infinite, penetrable
rough interface with bounded obstacles buried in the lower half-space, where the interface is assumed
to be a local perturbation of a planar surface. A novel version of the sampling method is proposed to
simultaneously reconstruct the local perturbation of the rough interface and buried obstacles by
constructing a modified near-field equation associated with a special rough surface, yielding a fast
imaging algorithm. Numerical examples are presented to illustrate the effectiveness of the inversion
algorithm.

\begin{keywords}
Inverse acoustic scattering, rough interfaces, buried obstacles, the linear sample method
\end{keywords}

\begin{AMS}
35R30, 35Q60, 65R20, 65N21, 78A46
\end{AMS}
\end{abstract}

\pagestyle{myheadings}
\thispagestyle{plain}
\markboth{Jiliang Li, Jiaqing Yang and Bo Zhang}
{Imaging of locally rough interface and buried obstacles}

\section{Introduction}
\setcounter{equation}{0}

This paper considers the problem of scattering of time-harmonic point sources by an infinite, penetrable
interface with buried obstacles below the interface. The direct and inverse scattering problems are studied,
where the direct scattering problem is to determine the distribution of the scattered wave in the whole
space when the incident wave, the interface and the buried obstacles with their boundary conditions
are given; while the inverse scattering problem aims to recover the shape and location of the unknown
interface and buried obstacles from the scattered wave measured on a bounded surface above the interface.
These problems have played a fundamental role in diverse scientific areas such as underwater exploration,
geophysical exploration and radar detection.

Precisely, we will consider the case where the rough interface is different from a plane over a finite
interval (called a locally rough interface), which separates the whole space into the upper and lower half-spaces.
The wave motion is then governed by the two-dimensional Helmholtz equation where the wavenumber is described
by a piecewise constant function. Across the interface, the total-field and its normal derivative are
assumed to be continuous, which also corresponds to the transverse electric polarization case.
For simplicity, we assume throughout this paper that the buried obstacles are sound-soft, which means that
the total-field vanishes on the boundary of the obstacles. Since the interface is only a local perturbation
of a plane surface, the Sommerfeld radiation condition remains valid to describe the behavior of the
scattered field away from the interface.
Scattering problems by unbounded rough surfaces have been studied by many authors for the case with no buried
obstacles via either the integral equation method or a variational approach
(see, e.g., \cite{SP05,WW87,BHY2018,SE10,GJ11,SRZ99,SZ99,QZZ19,ZZ13} and the references therein).
For the scattering problem considered in this paper its well-posedness has been established in our recent
work \cite{YLZ20} by reformulating the direct problem as the scattering problem by obstacles in a two-layered
medium together with the integral equation method.

In this paper, we are mainly interested in the inverse scattering problem which aims to simultaneously
recover the shape and location of the unknown interface and buried obstacles from the knowledge of the
scattered field measured on a line segment above the interface.
A global uniqueness result has been established in \cite{YLZ20} for this inverse scattering problem, 
that is, the locally rough interface, the wavenumber in the lower half-space and the buried obstacle 
along with its boundary condition can be uniquely determined by the scattered field measured on a line segment 
in the upper half-space and generated by point sources.
Based on this uniqueness result, we aim to develop an efficient algorithm to solve the inverse problem
numerically. If the rough interface is known, a numerical method has been given in \cite{AAY07} to detect 
the location of the buried obstacles from the far-field pattern, based on the determination of
the surface impedance that contains jumps at the surface points just above the buried obstacles.
If the rough interface is a plane, an inversion scheme was proposed to recover
the buried, impenetrable obstacles with phased far-field data in \cite{LLLL15}.
However, if the rough interface is an unknown perturbation 
of a plane, as far as we know, no inversion algorithm is available to reconstruct the rough interface
and the buried obstacle simultaneously.

So far, many inversion algorithms have been developed for solving inverse scattering problems by
unbounded rough surfaces for the case with no buried obstacles, such as 
the Kirsch-Kress method in \cite{CR10},
Newton-type algorithms in \cite{GJ11,CG11,SP02,QZZ19,ZZ2017,RM15, KT00, ZZ13} and the transformed field expansion based reconstruction algorithm 
in \cite{GL13,GL14} for the case when the surface is a small and smooth perturbation of a plane.
Note that the reconstruction methods in \cite{GL13, GL14, GJ11, QZZ19, ZZ13}
are iterative type methods under certain a priori knowledge on the rough surface.
On the other hand, several non-iterative type methods have also been proposed for the special case when
no buried obstacles exist, such as the singular source method for recovering
a perfectly conducting surface in \cite{C03,LC05}, the factorization method in \cite{AL08} for a Dirichlet
rough surface in the case $\kappa f_+\in (0,\sqrt{2})$, where $\kappa$ is the wave number and $f_+$
is the amplitude of the rough surface, the sampling type method for reconstructing an infinite,
locally rough interface in \cite{LYZ18}, extending the work in \cite{DLLY17} from the Dirichlet
impenetrable surface to the penetrable interface, and the direct imaging method for both impenetrable 
and penetrable surfaces in \cite{LZZ18,Zhang20}, 
and for a perfectly conducting locally rough surface with phaseless data in \cite{XZZ19}.  

In this paper, we will develop a linear sampling method (LSM) to solve the inverse scattering problem 
considered numerically, that is, to reconstruct both the local perturbation of the rough interface and 
the buried obstacles from the scattered near-field measurements in the upper half-space. 
Recently in \cite{LYZ18}, we developed a linear sampling method to reconstruct the local perturbation 
of the rough interface for the case with no buried obstacles, based on reformulating the original 
scattering problem by a local rough interface into an equivalent integral equation formulation in a 
bounded domain with the help of the free-space Green function associated with a class of special 
rough surfaces. However, the idea in \cite{LYZ18} does not work to reconstruct the interface and the 
buried obstacles simultaneously. To overcome this difficulty, we consider the difference between 
the solution to the original scattering problem and the solution to the scattering problem by a plane 
interface. Then the original scattering problem can be reduced to the scattering problem by an 
inhomogeneous medium of compact support and the buried obstacles. Based on this, we can construct 
a near-field equation and present a novel version of LSM to numerically recover the rough interface and 
the buried obstacles simultaneously. As far as we know, this is the first sampling type method 
for reconstructing both the rough interface and the buried obstacles simultaneously.  

The remaining part of this paper is organized as follows. Section \ref{sec2} presents some basic notations 
and function spaces used in the paper. Section \ref{sec3} introduces the mathematical formulation 
of the scattering problem of an incident point source by a locally rough interface with a buried obstacle 
below the interface. In Section \ref{sec3}, we also reformulate the original scattering problem as the one 
by an inhomogeneous medium of compact support and the buried obstacle, based on the difference between the 
solution to the original scattering problem and the solution to the scattering problem by a plane interface.
A novel version of the classical LSM is proposed in Section \ref{sec4} for solving the inverse problem via  
constructing a modified near-field equation. Numerical results are presented in Section \ref{sec5} to illustrate  
the validity of the inversion algorithm. Conclusions are given in Section \ref{sec6}.

\section{Preliminaries}\label{sec2}

In this section, we introduce some basic notations and function spaces used in this paper.
\begin{figure}[htbp]
\centering
\includegraphics[width=4in, height=2in]{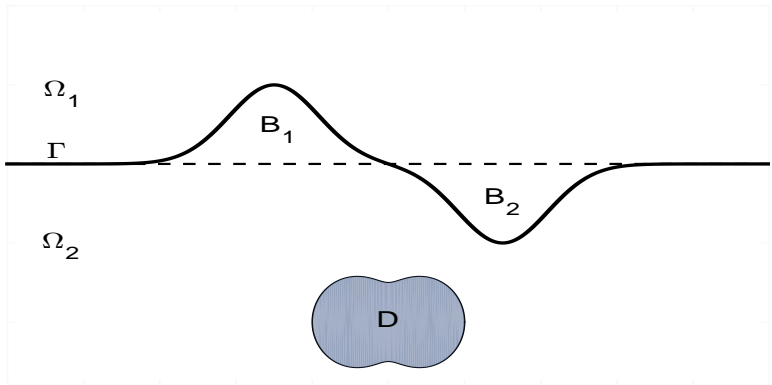}
\caption{The physical configuration of the scattering problem.}
\label{f1}
\end{figure}
As seen in Figure \ref{f1}, the scattering interface is described by a smooth curve
$\G:=\{(x_1, x_2)\in\R^2: x_2=f(x_1)\}$. Here, $f$ is assumed to be a Lipschitz continuous function which
is different from $0$ over a finite interval. It means that $\G$ is just a local perturbation of the planar
interface $\G_0:=\{(x_1,x_2)\in\R^2:x_2=0\}$. Then the whole space is
separated by $\G$ into two unbounded half-spaces $\Omega_1:=\{(x_1, x_2)\in\R^2: x_2>f(x_1)\}$ and
$\Omega_2:=\{(x_1, x_2)\in\R^2: x_2<f(x_1)\}$. Denoted by $B_1:=\R^2_{+}\cap\Omega_2$ the bounded domain
above $\G_0$ and $B_2:=\R^2_-\cap\Omega_1$ the bounded domain below $\G_0$, where $\R^2_\pm:=\{x_2\gtrless0\}$.
For simplicity, we consider in this paper a simple case that $\G$ has only two local perturbations;
see Figure \ref{f1}. The results obtained can be easily extended to the case of multiple local perturbations.
Moreover, the embedded obstacle $D$ is considered to be buried into the lower half-space $\Om_2$
with a smooth boundary $\pa D \in C^{2,\alpha}$ for some H\"{o}lder exponent $0<\alpha\leq 1$.

For a bounded domain $\Om\subset\R^2$ with Lipschitz boundary $\pa\Om$, denote by $W^{m,p}(\Om)$
the usual Sobolev space for index $m\in\N$ and $p\in [1,\infty)$, which is a Banach space
with respect to the $W^{m,p}$-norm
\ben
||u||_{m,p}:=\left(\sum\limits_{|\alpha|\leq m} ||\pa^{\alpha}u||^p_{p}\right)^{1/p}.
\enn
If $p=2$, it is also conventional to write $H^m:=W^{m,2}$, which is a Hilbert space under the inner product
$(u,v)_m: = \sum_{|\alpha|\leq m}(\pa^\alpha u,\pa^\alpha v)_{L^2}$. It is easily observed
that the space $W^{0,2}(\Om)$ coincides with the usual space $L^2(\Om)$ consisting of all square-integrable
functions on $\Om$.

Moreover, let $L^2_{\Delta}(\Omega):=\{u\in\mathcal{D'}(\Om)|u\in L^2(\Om),\;\Delta u\in L^2(\Om)\}$.
Clearly, it is also a Hilbert space equipped under the inner-product
\ben
(u,v)_{L^2_{\Delta}(\Om)}=(u,v)_{L^2(\Om)}+(\Delta u,\Delta v)_{L^2(\Om)}
\quad{\rm for\;}u,v\in L^2_{\Delta}(\Om).
\enn
Let $\g_ju:=\frac{\pa^j u}{\pa\nu^j}\big|_{\pa\Om}$ $(j=0,1)$ denote the trace maps defined
on $C^{\infty}(\ov{\Om})$, where $\nu$ is the outward unit normal vector to $\pa\Om$.
By the Sobolev embedding theorem, $\g_j$ has a continuous extension from $L^2_{\Delta}(\Om)$
into $H^{-j-1/2}(\pa\Om)$, that is, there exists a fixed constant $C>0$ such that
\ben
||\g_ju||_{H^{-j-1/2}(\pa\Omega)}\leq C||u||_{L^2_{\Delta}(\Omega)}
\enn
for $j=0,1$.

Furthermore, we introduce the subspaces $X_j$ of $L^2_{\Delta}(B_j)$ by
\be\label{a1}
X_j:=\{u\in L^2_{\Delta}(B_j),\;\Delta u+\kappa_j^2 u=0\quad{\rm in}\;B_j\}
\en
for $j=1,2$. In (\ref{a1}) the equation $\Delta u+\kappa_j^2 u=0$ is understood in the distribution
with the two positive constants $\kappa_1>0$ and $\kappa_2>0$.  It is easily verified that $X_j$ is
a closed subspace of $L^2_{\Delta}(B_j)$ and is thus a Hilbert space. Let $\chi_{j}$ denote the
characterization function of the domain $B_j$, defined by $\chi_j=1$ in $B_j$ and vanishes outside $B_j$.
Then, we can define the space
\ben\label{a2}
X:=\{u=\chi_1u_1+\chi_2u_2: \;u_1\in X_1, u_2\in X_2\},
\enn
which is also a Hilbert space equipped with the inner product $(u,v)_X=(u_1,v_1)_{L^2_{\Delta}(B_1)}+(u_2,v_2)_{L^2_{\Delta}(B_2)}$ for $u, v\in X$.

\section{Mathematical formulation}\label{sec3}
\setcounter{equation}{0}

In this section, we introduce the mathematical description on the scattering of time-harmonic point sources
from a locally rough interface with a buried obstacle in the lower half-space.
Suppose the incident field $u^{\rm i}(\cdot,y)$ induced by a point source
\ben
\Phi_{\kappa_1}(x,y) = \frac{\rm i}{4}H_0^{(1)}(\kappa_1|x-y|),\quad x\not=y,\;y\in\Om_1,
\enn
which corresponds to a fundamental solution to the Helmholtz equation $\Delta \Phi_{\kappa_1}(\cdot,y)+\kappa_1^2\Phi_{\kappa_1}(\cdot,y)=-\delta_y$ in $\R^2$. Here, $H_0^{(1)}(\cdot)$
is the Hankel function of the first kind of order zero. Then the scattering of $u^{\rm i}(\cdot,y)$ by $\G$
and $D$ can be modeled by the two-dimensional Helmholtz equation
 \be\label{a4}
 \Delta u+\kappa^2u=-\delta_y\quad{\rm in}\;\;\R^2\se\ov{D},
 \en
where  $\kappa>0$ is the wavenumber defined as $\kappa:=\kappa_1$ in $\Omega_1$ and $\kappa:=\kappa_2$
in $\Omega_2$, and $u$ is the total field defined as $u:=u^{\rm i}+u^{\rm s}$
in $\Om_1$ and $u:=u^{\rm s}$ in $\Om_2\se\ov{D}$.

For simplicity, we consider that $u$ satisfies a Dirichlet boundary condition on $\pa D$:
\be\label{a7}
u=0\qquad{\rm on}\; \pa D,
\en
which means that $D$ is a sound-soft obstacle.
Since $\G$ is just a local perturbation of $\G_0$, the scattered field $u^{\rm s}$ satisfies the so-called
Sommerfeld radiation condition
\be\label{a8}
\lim_{r\rightarrow\infty}\sqrt{r}\left(\frac{\pa u^{\rm s}}{\pa r}-{\rm i}\kappa u^{\rm s}\right)=0,\quad r=|x|.
\en
It should be pointed out that the condition (\ref{a8}) can be replaced by the much weaker Upward and
Downward Propagating Radiation Conditions (UPRC and DPRC). We refer the reader to \cite{SRZ99,SZ99,DLLY17}
for detailed discussions on the UPRC and DPRC for nonlocal surfaces.

The well-posedness of (\ref{a4})-(\ref{a8}) has been extensively studied in the literature for the case
$D=\emptyset$; see e.g., \cite{SE10,LYZ13, DTS03,MT06,ZZ13}. Moreover, it is noteworthy that a different method
from the previous works was recently proposed by the current authors in \cite{YLZ20}, where
the well-posedness of Problem (\ref{a4})-(\ref{a8}) was shown in Sobolev spaces based on an equivalent
Lippmann-Schwinger type integral equation defined in a bounded domain.

In this paper, we are concerned with the inverse problem of simultaneously recovering the rough interface $\G$ and the buried obstacle $D$ from near-field measurements above $\G$. We refer to \cite{YLZ20}
with a global uniqueness theorem for determining all unknown $\G$, $D$ and $\kappa_2$. Based on this, we aim to study its numerical solution by proposing a valid sampling-type method.
Since $\G$ is a local perturbation of $\G_0$, the recovery of $\G$ can be reduced to the recovery of the local perturbations $B_j$. It means that our inverse problem can be reduced to distinguish two different local perturbations $B_1$ and $B_2$ and the buried obstacle $D$. To this end, we first introduce the fundamental solution $G_0(x, y)$ of the
two-dimensional Helmholtz equation in a two-layered background medium separated by $\G_0$, which satisfies
\be\label{a9}
\left\{\begin{array}{ll}
\Delta_x G_0(x,y)+\kappa^2_0G_0(x,y)=-\delta_y &\qquad{\rm in}\; \R^2\\ [3mm]
\ds\lim_{r\rightarrow\infty}\sqrt{r}\left(\frac{\pa G_0(x,y)}{\pa r}-{\rm i}\kappa_0G_0(x,y)\right)=0&
\qquad {\rm for}\; r=|x|
\end{array}\right.
\en
with the wavenumber $\kappa_0:=\kappa_1$ in $\R^2_+$ and $\kappa_0:=\kappa_2$ in $\R^2_-$.
It follows from \cite{L10} that $G_0(x,y)$ has an explicit expression
\ben
&&G_0(x,y):=\Psi^{(1)}(x,y)+\Phi_1(x,y)\qquad{\rm for}\; x\in \R^2_+,\;\;y\in\R^2_+,\\
&&G_0(x,y):=\Psi^{(2)}(x,y)\qquad\;\;\;\qquad\qquad{\rm for}\; x\in \R^2_-,\;\;y\in\R^2_+,
\enn
where
\ben
&&\Psi^{(1)}(x,y)=\frac{\rm i}{4\pi}\int_{\R}\frac{1}{\beta_1}\frac{\beta_1-\beta_2}{\beta_1+\beta_2}e^{{\rm i}\beta_1(x_2+y_2)}e^{{\rm i}\xi (x_1-y_1)}{\rm d}\xi\\
&&\Psi^{(2)}(x,y)=\frac{\rm i}{2\pi}\int_{\R}\frac{e^{{\rm i}(\beta_1y_2-\beta_2x_2)}}{\beta_1+\beta_2}e^{{\rm i}\xi (x_1-y_1)}{\rm d}\xi\\
\enn
with $\beta_j$ given by
\ben
\beta_j=\left\{\begin{array}{l}
                 \sqrt{\kappa_j^2-\xi^2}\qquad\; {\rm for}\; |\kappa_j|>|\xi|,\\
                 {\rm i}\sqrt{\xi^2-\kappa_j^2}\qquad {\rm for}\; |\kappa_j|<|\xi|.
               \end{array}
\right.
\enn
Applying the dominated convergence theorem to $\Psi^{(1)}(x,y)$, $\Psi^{(2)}(x,y)$ and
their derivatives, we have $\Psi^{(1)}(x,y),\Psi^{(2)}(x,y)\in C^{\infty}(\R^2\times\R^2\se\{y\}\times\{y\})$.

For a fixed $y\in\R^2_+$, it is easily found that $G_0(x,y)$ contains the information of $\G_0$,
and the solution $u(x,y)$ to (\ref{a4})-(\ref{a8}) contains the information of the scattering surface $\G$
and the buried obstacle $D$. Thus, we consider the difference $\tilde{u}(x,y):=u(x,y)-G_0(x,y)$
which clearly contains all unknown information of $B_1$, $B_2$ and $D$.
It then follows from the Helmholtz equations for $G_0$ and $u$ that $\tilde{u}$ satisfies
\be\label{a11}
\Delta\tilde{u}+\kappa^2\tilde{u}= g\quad{\rm in}\;\;\R^2\se\ov{D},
\en
and the Sommerfeld radiation condition (\ref{a8}), where $g: = \eta(\chi_1-\chi_2)G_0$
with $\eta:=\kappa_1^2-\kappa_2^2$. The Dirichlet boundary condition (\ref{a7}) for $u$ gives
\be\label{a12}
\tilde{u} = -G_0\qquad{\rm on}\; \pa D.
\en
With the above analysis, Problem (\ref{a11})-(\ref{a12}) can be regarded as a special case of
the following boundary value problem of finding $\tilde{u}\in H^1_{\rm loc}(\R^2\se\ov{D})$ such that
\be\label{a13}
\left\{\begin{array}{lll}
\Delta \tilde{u}+\kappa^2\tilde{u}=h_1& \qquad {\rm in}\; \R^2\se\ov{D}\\ [1mm]
\tilde{u}=h_2&\qquad{\rm on}\; \pa D\\ [1mm]
\ds\lim_{r\rightarrow\infty}\sqrt{r}\left(\frac{\pa\tilde{u}}{\pa r}-{\rm i}\kappa\tilde{u}\right)
=0& \qquad {\rm for}\;\; r=|x|
\end{array}\right.
\en
for $h_1\in X$ and $h_2\in Y$. Here, $Y:=\{\g_0 u:\; u\in H^1(D)\;\mbox{satisfies}\;\Delta u+\kappa_2^2u
=0\;{\rm in}\;D\}.$

Next, we shall prove the existence of a unique solution to Problem (\ref{a13}) in $H^1_{\rm loc}(\R^2\se\ov{D})$. The uniqueness follows directly from the uniqueness of the scattering problem (\ref{a4})-(\ref{a8}). For the existence, we decompose $\tilde{u}$ into two parts: $\tilde{u}:=\tilde{u}_1+\tilde{u}_2$ with $\tilde{u}_1$ and $\tilde{u}_2$ satisfying
\be\label{a14}
\left\{\begin{array}{lll}
\Delta \tilde{u}_1+\kappa^2\tilde{u}_1=h_1& \qquad {\rm in}\; \R^2\se\ov{D}\\ [1mm]
\tilde{u}_1=0&\qquad{\rm on}\; \pa D\\ [1mm]
\ds\lim_{r\rightarrow\infty}\sqrt{r}\left(\frac{\pa\tilde{u}_1}{\pa r}-{\rm i}\kappa\tilde{u}_1\right)
=0&\qquad {\rm for}\;\;r=|x|
\end{array}\right.
\en
and
\be\label{a15}
\left\{\begin{array}{lll}
\Delta \tilde{u}_2+\kappa^2\tilde{u}_2=0& \qquad {\rm in}\; \R^2\se\ov{D}\\ [1mm]
\tilde{u}_2=h_2&\qquad{\rm on}\; \pa D\\ [1mm]
\ds\lim_{r\rightarrow \infty}\sqrt{r}\left(\frac{\pa\tilde{u}_2}{\pa r}-{\rm i}\kappa\tilde{u}_2\right)
=0& \qquad {\rm for}\;\; r=|x|
\end{array}\right.
\en
respectively.

Recall that $h_1\in X$, and so there exists $h_{1j}\in X_j$, $j=1,2$, such that $h_1=\chi_1 h_{11}+\chi_2 h_{12}$.
By the standard discussion, it can be deduced that the unique solution to Problem (\ref{a14} ) can be
written in the following form
\ben\label{a16}
\tilde{u}_1(x)=-\int_{B_1}\Psi(x,y)h_{11}(y){\rm d}y-\int_{B_2}\Psi(x,y)h_{12}(y){\rm d}y\qquad{\rm for}\; x\in\R^2\se\ov{D},
\enn
where $\Psi(x,y)$ denotes the associated Green's function to Problem (\ref{a4})-(\ref{a8}).
For the existence of $\Psi(x,y)$, we refer to, e.g., \cite{YLZ20} for a detailed discussion.

Next, we shall prove the existence of a unique solution to Problem (\ref{a15}) by employing the boundary
integral equation technique. To this end, let $G_{\G}(x,y)$ be the fundamental solution of the two-dimensional
Helmholtz equation in a two-layered background medium separated by $\G$ such that
\be\label{a17}
\left\{\begin{array}{ll}
\Delta_x G_{\G}(x,y)+\kappa^2G_{\G}(x,y)=-\delta_y &\qquad{\rm in}\; \R^2\\ [3mm]
\ds\lim_{r\rightarrow\infty}\sqrt{r}\left(\frac{\pa G_{\G}(x,y)}{\pa r}-{\rm i}\kappa G_{\G}(x,y)\right)=0
& \qquad {\rm for}\; r=|x|.
\end{array}\right.
\en
For $h_2\in Y$, we seek a solution in the form of a combined double- and single-layer potential
\ben\label{a19}
\tilde{u}_2(x)=\int_{\pa D}\left(\frac{\pa G_{\G}(x,y)}{\pa\nu(y)}-{\rm i}G_{\G}(x,y)\right)\psi(y){\rm d}s(y)
\quad{\rm for}\; x\in\R^2\se\ov{D}
\enn
with density $\psi\in H^{1/2}(\pa D)$.

In view of (\ref{a17}), $\tilde{u}_2$ automatically satisfies the Helmholtz equation and
the Sommerfeld radiation condition. Following the boundary condition on $\pa D$, it is seen
that $\tilde{u}_2$ is a solution of Problem (\ref{a15}) if $\psi$ is a solution of the following equation
\ben\label{a20}
(I+K-{\rm i}S)\psi = 2h_2,
\enn
where $S$ and $K$ are the single- and double-layer operators given by
\ben\label{a21}
&&(S\psi)(x):=2\int_{\pa D}G_{\G}(x,y)\psi(y){\rm d}s(y),\qquad\quad x\in\pa D,\\\label{a22}
&&(K\psi)(x):=2\int_{\pa D}\frac{\pa G_{\G}(x,y)}{\pa \nu(y)}\psi(y){\rm d}s(y),\qquad x\in\pa D.
\enn
It is known by corollary 3.7 in \cite{CK13} as well as $\pa D\in C^{2,\alpha}$ for
some H\"{o}lder exponent $0<\alpha\leq 1$ that both $S$ and $K$ are bounded from $H^{1/2}(\pa D)$
into $H^{1}(\pa D)$.
Using the compact embedding of $H^{1}(\pa D)$ into $H^{1/2}(\pa D)$, it is concluded
that $I+K-{\rm i}S: H^{1/2}(\pa D)\rightarrow  H^{1/2}(\pa D)$ is Fredholm type with index $0$.
By a similar argument as in the proof of Theorem 3.11 in \cite{CK13}, and with the aid of the classical
Riesz-Fredholm theory we can prove the existence of $\psi$.
These results are summarized in the following theorem.

\begin{theorem}\label{theorem1}
If $\pa D\in C^{2,\alpha}$ with some H\"{o}lder exponent $0<\alpha\leq 1$,
then, for $(h_1, h_2)\in X\times Y$ the problem (\ref{a13}) has a unique solution
$\tilde{u}\in H^1_{\rm loc}(\R^2\se\ov{D})$ satisfying that
\ben
\|\tilde{u}\|_{H^1((\R^2\se\ov{D})\cap B_R)}\leq C_R(\|h_1\|_X+\|h_2\|_Y)
\enn
for some positive constant $C_R>0$ depends on $R>0$, where $B_R:=\{x\in\R^2:|x| <R\}$.
\end{theorem}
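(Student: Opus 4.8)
The plan is to prove Theorem \ref{theorem1} by the decomposition $\tilde u=\tilde u_1+\tilde u_2$ that the excerpt has already set up, establishing uniqueness first and then existence with the stated estimate via estimates for each piece separately.

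First I would dispose of uniqueness. The difference of two solutions to (\ref{a13}) solves the homogeneous problem, i.e. the Helmholtz equation in $\R^2\se\ov D$ with zero Dirichlet data on $\pa D$ and the Sommerfeld radiation condition. This is exactly the homogeneous version of the original scattering problem (\ref{a4})--(\ref{a8}), whose uniqueness is guaranteed by the well-posedness result from \cite{YLZ20} cited just before the statement; so the difference vanishes and the solution is unique.

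Next I would handle existence piece by piece. For $\tilde u_1$ the excerpt already gives the explicit volume-potential representation against the Green's function $\Psi(x,y)$ of Problem (\ref{a4})--(\ref{a8}). The task here is to read off the estimate $\|\tilde u_1\|_{H^1((\R^2\se\ov D)\cap B_R)}\le C_R\|h_1\|_X$: since $h_1=\chi_1 h_{11}+\chi_2 h_{12}$ with each $h_{1j}\in X_j\subset L^2(B_j)$, I would use the mapping properties of the volume potential with kernel $\Psi$ (continuity from $L^2$ of the compactly supported source into $H^1$ on any bounded region, which follows from the standard smoothing of the Green function together with the decay built into the radiation condition) and the fact that $\|h_{1j}\|_{L^2(B_j)}\le\|h_{1j}\|_{X_j}$ to bound $\tilde u_1$ by $\|h_1\|_X$. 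For $\tilde u_2$ I would invoke the integral-equation analysis already performed: the combined layer-potential ansatz with density $\psi$ reduces the problem to $(I+K-{\rm i}S)\psi=2h_2$, the operator $I+K-{\rm i}S$ is Fredholm of index zero on $H^{1/2}(\pa D)$ by the compactness of $K-{\rm i}S$ (via the $H^{1/2}\to H^1\hookrightarrow H^{1/2}$ compact embedding), and injectivity follows from the uniqueness already proved. Hence $(I+K-{\rm i}S)^{-1}$ is bounded, giving $\|\psi\|_{H^{1/2}(\pa D)}\le C\|h_2\|_{Y}$; the boundedness of the layer potential from the density into $H^1$ on bounded regions (again using the radiation condition to control behaviour at infinity) then yields $\|\tilde u_2\|_{H^1((\R^2\se\ov D)\cap B_R)}\le C_R\|h_2\|_Y$. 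Adding the two bounds produces the claimed estimate.

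I expect the genuinely delicate point to be the injectivity of $I+K-{\rm i}S$, equivalently the uniqueness that feeds into it, because $G_\G$ is the full two-layered Green's function rather than the free-space fundamental solution, so the usual Rellich/jump-relation argument for the combined potential must be run relative to the $\G$-background and relies on the global radiation condition being compatible with the layered medium; getting the jump relations and the Green's identity right in this layered setting is where the care is needed. A secondary technical obstacle is justifying that the kernel $G_\G$ inherits enough regularity and the right mapping properties ($H^{1/2}(\pa D)\to H^1(\pa D)$ for $S,K$) away from its singularity, for which I would lean on the smoothness of $\pa D\in C^{2,\alpha}$ and the local structure of $G_\G$, reducing modulo smooth remainders to the classical free-space case treated in \cite{CK13}. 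Everything else—the volume-potential estimate for $\tilde u_1$ and the final summation—is routine once these layered-medium facts are in hand.
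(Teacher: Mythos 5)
Your proposal follows the paper's own argument essentially verbatim: uniqueness inherited from the scattering problem (\ref{a4})--(\ref{a8}), the splitting $\tilde u=\tilde u_1+\tilde u_2$ with $\tilde u_1$ represented as a volume potential against the Green's function $\Psi$, and $\tilde u_2$ as a combined double- and single-layer potential with kernel $G_{\G}$ whose density solves $(I+K-{\rm i}S)\psi=2h_2$, handled by the Fredholm-index-zero argument via the compact embedding $H^1(\pa D)\hookrightarrow H^{1/2}(\pa D)$ and Riesz--Fredholm theory. The points you flag as delicate (jump relations and mapping properties of the layered-medium kernel $G_{\G}$, reduced modulo smooth remainders to the free-space case in \cite{CK13}) are exactly the steps the paper also leans on, so the proposal is correct and takes the same route.
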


By Theorem \ref{theorem1}, we define the solution operator $L: X\times Y\rightarrow L^2(\G_{b,a})$
of Problem (\ref{a13}) by
\ben\label{a23}
L(h_1, h_2)=\tilde{u}|_{\G_{b,a}},
\enn
where $\G_{b,a}:=\{x=(x_1,x_2)\in\R^2, |x_1|\leq a, x_2=b\}$ with $a,b>0$. It follows from
Theorem \ref{theorem1} that $L$ is bounded from $X\times Y$ into $L^2(\G_{b,a})$.

For further analysis for our sampling method,
we introduce the associated interior transmission problem (ITP) of finding a pair of
functions $(v,w)\in L^2(\Omega)\times L^2(\Omega)$ satisfying
\be\label{a24}
\Delta v+\kappa_1^2v=0,\qquad \Delta w+\kappa_1^2q(x) w=0\qquad{\rm in}\;\Om,
\en
and the transmission conditions
\be\label{a25}
w-v=G_0(\cdot,z),\qquad \frac{\pa w}{\pa \nu}-\frac{\pa v}{\pa \nu}
=\frac{\pa G_0(\cdot,z)}{\pa\nu}\qquad{\rm on }\;\;\pa\Om,
\en
where $z\in \Omega$ and $q(x):=\kappa_2^2/\kappa_1^2$.
Recall that the values $\kappa_1^2>0$ is called a transmission eigenvalue if the homogeneous ITP
has nonzero solutions $(v,w)\in L^2(\Om)\times L^2(\Om)$ with $v-w\in H_0^2(\Omega)$.
It was shown in \cite{CGH10} that there exists an infinite discrete set of transmission eigenvalues.
It was further shown that if $\kappa_1^2$ is not transmission eigenvalues then there exists a unique
solution $(v,w)\in L^2(\Om)\times L^2(\Om)$ to the problem (\ref{a24})-(\ref{a25})
with $w-v\in H^2(\Omega)$ for each $z\in\Om$.

\begin{lemma}\label{lemma1}
If $\kappa^2_1$ is neither transmission eigenvalue of $B_1$ nor that of $B_2$, then $L$ is injective.
\end{lemma}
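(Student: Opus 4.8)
The plan is to show that if $L(h_1,h_2)=0$ for some $(h_1,h_2)\in X\times Y$, then necessarily $h_1=0$ and $h_2=0$, which gives injectivity of $L$. By Theorem \ref{theorem1}, the vanishing of $\tilde u=L(h_1,h_2)$ on the measurement segment $\G_{b,a}$ will be exploited together with a unique continuation argument. First I would observe that $\tilde u$ solves the Helmholtz equation $\Delta\tilde u+\kappa^2\tilde u=h_1$ in $\R^2\se\ov D$, where $h_1$ is supported in $B_1\cup B_2$ and $\kappa=\kappa_1$ in the region above $\G$. In the half-space region of $\Om_1$ lying \emph{above} the strip containing all of $B_1$, $B_2$ and $\G_{b,a}$, the right-hand side $h_1$ vanishes and $\tilde u$ is a radiating solution of the homogeneous Helmholtz equation with wavenumber $\kappa_1$. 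Since $\tilde u|_{\G_{b,a}}=0$ and $\tilde u$ is real-analytic there, Holmgren's theorem (or analyticity of solutions of the Helmholtz equation) forces $\tilde u\equiv 0$ in the unbounded component of $\Om_1$ above the strip, and hence, by unique continuation across $\G_{b,a}$, $\tilde u$ vanishes in all of $\Om_1$ down to $\G_0$ outside the support of $h_1$.

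Next I would propagate this vanishing into the perturbation regions. Using the jump relations implicit in the definition of $\tilde u=u-G_0$ and the continuity of $u$ and its normal derivative across $\G$, together with $\tilde u=0$ on one side, I would argue that the Cauchy data of $\tilde u$ vanish on $\G_0$ outside the local perturbations. Unique continuation then yields $\tilde u=0$ throughout the exterior of $B_1\cup B_2\cup D$. The key point is to then localize the problem to the domains $B_1$ and $B_2$ and recognize the resulting system as the homogeneous interior transmission problem \eqref{a24}--\eqref{a25}. Concretely, restricting to $B_j$ and using the equation $\Delta h_{1j}+\kappa_j^2 h_{1j}=0$ from the definition of $X_j$ in \eqref{a1}, I expect the pair $(v,w)$ built from $\tilde u$ and $h_1$ (appropriately identified with the two wavenumbers $\kappa_1$ and $\kappa_2=\kappa_1\sqrt{q}$) to satisfy the homogeneous ITP on each $B_j$ with zero transmission data.

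The hypothesis that $\kappa_1^2$ is neither a transmission eigenvalue of $B_1$ nor of $B_2$ then forces the only solution of each homogeneous ITP to be the trivial one, so that $h_{11}=0$ on $B_1$ and $h_{12}=0$ on $B_2$, giving $h_1=0$. Once $h_1=0$, the decomposition $\tilde u=\tilde u_1+\tilde u_2$ from \eqref{a14}--\eqref{a15} collapses to $\tilde u=\tilde u_2$, which is the exterior Dirichlet problem \eqref{a15} with boundary data $h_2$; since $\tilde u_2$ now vanishes identically outside $D$, its trace $h_2=\g_0\tilde u_2$ on $\pa D$ is zero as well. This establishes $(h_1,h_2)=(0,0)$ and hence injectivity. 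I expect the main obstacle to be the careful bookkeeping of the matching and jump conditions across $\G$ and $\G_0$ that is needed to reduce the vanishing of $\tilde u$ to the clean homogeneous transmission system on each $B_j$; the two-layered, piecewise-constant wavenumber structure and the presence of two separate perturbation regions make this reduction the delicate step, whereas the final appeal to the transmission-eigenvalue hypothesis is then immediate.
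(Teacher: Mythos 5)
Your proposal follows essentially the same route as the paper's proof: vanishing of $\tilde u$ on $\G_{b,a}$, analyticity and uniqueness of the Dirichlet problem in the upper half-plane, unique continuation to get $\tilde u=0$ outside $\ov{B_1}\cup\ov{B_2}\cup\ov{D}$, and then the homogeneous interior transmission problem on each $B_j$ together with the transmission-eigenvalue hypothesis to force $h_{1j}=0$ (the paper makes explicit the pair you left implicit, namely $v=\eta^{-1}h_{1j}$ and $w=\tilde u+v$, whose zero transmission data come from the vanishing Cauchy data of $\tilde u$ on $\pa B_j$). The only cosmetic difference is ordering: the paper reads off $h_2=0$ directly from $\tilde u|_{\pa D}=0$ before the ITP step, whereas you recover it afterwards through the decomposition (\ref{a14})--(\ref{a15}), a harmless detour.
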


\begin{proof}
Let $\tilde{u}\in H^1_{\rm loc}(\R^2\se\ov{D})$ be the solution to Problem (\ref{a13})
for $(h_1,h_2)\in X\times Y$. If $L(h_1,h_2)=0$, we have $\tilde{u}=0$ on $\G_{b,a}$.
The analyticity of $\tilde{u}$ implies that $\tilde{u}$ vanishes on $\G_b:=\{x\in\R^2:x_2=b\}$.
Then it is concluded that $\tilde{u}=0$ in $U_b:=\{x\in\R^2:x_2>b\}$ from the uniqueness of
the Dirichlet problem. The analytic continuation discussion shows that $\tilde{u}=0$ in $\R^2_+\cap\Omega_1$.
By noting the continuity of the Cauchy data $(\tilde{u},\frac{\pa\tilde{u}}{\pa\nu})$ across $\G_0\cap\G$,
it is obtained by the unique continuation principle that $\tilde{u}=0$
in $\R^2\se\{\ov{B_1}\cup\ov{B_2}\cup\ov{D}\}$. Hence, $h_2=0$.

Since $h_1\in X$, there exists $h_{11}\in X_1$ and $h_{12}\in X_2$ such that $h_1=\chi_1h_{11}+\chi_2h_{12}$.
Thus, it suffices to show that $h_{11}=0$ and $h_{12}=0$. To this end, define $\psi_1:=\eta^{-1}h_{11}$
and $\psi_2:=\tilde{u}+\psi_1$. It follows from (\ref{a1}), (\ref{a13}) and the continuity of
$(\tilde{u},{\pa\tilde{u}}/{\pa\nu})$ that $(\psi_1,\psi_2)$ solves the homogeneous form of
the interior transmission problem (\ref{a24})-(\ref{a25}) in $B_1$. This yields that $(\psi_1,\psi_2)=0$
since $\kappa_1^2$ is not a transmission eigenvalue in $B_1$. Therefore, $h_{11}=0$. Define $\varphi_1:=\tilde{u}+\varphi_2$ with $\varphi_2:=-\eta^{-1}h_{12}$. Then we have by a similar argument
that $h_{12}=0$. The proof is thus complete.
\end{proof}

\begin{lemma}\label{lemma2}
If $\kappa^2_1$ is neither transmission eigenvalue of $B_1$ nor that of $B_2$ and $\kappa_2^2$ is not a
Dirichlet eigenvalue for $-\Delta$ in $D$, then $G_0(x,z)|_{\G_{b,a}}\in{\rm Range}(L)$ if and only if
$z\in D\cup B_1\cup B_2$.
\end{lemma}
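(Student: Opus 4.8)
The plan is to establish the two implications separately, reading the assertion $G_0(\cdot,z)|_{\G_{b,a}}\in{\rm Range}(L)$ as the solvability of (\ref{a13}) with some pair $(h_1,h_2)\in X\times Y$ whose solution $\tilde{u}$ coincides with $G_0(\cdot,z)$ on the measurement segment. The organizing principle throughout is the uniqueness part of Theorem \ref{theorem1}: to place $G_0(\cdot,z)|_{\G_{b,a}}$ in the range it suffices to exhibit one globally defined $\tilde{u}$ solving (\ref{a13}) for some admissible data and restricting to $G_0(\cdot,z)$ on $\G_{b,a}$, since by uniqueness that $\tilde{u}$ must equal $L(h_1,h_2)$.

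For the sufficiency I would first treat $z\in D$ by simply taking $\tilde{u}=G_0(\cdot,z)$ on all of $\R^2\se\ov{D}$. Because $z\in\Om_2\cap\R^2_-$ lies in neither $B_1$ nor $B_2$, the function $G_0(\cdot,z)$ solves the homogeneous $\kappa_0$-Helmholtz equation there, and a direct computation gives $\Delta\tilde{u}+\kappa^2\tilde{u}=(\kappa^2-\kappa_0^2)G_0=-\eta(\chi_1-\chi_2)G_0=:h_1$, whose two pieces $-\eta G_0|_{B_1}$ and $\eta G_0|_{B_2}$ lie in $X_1$ and $X_2$; hence $h_1\in X$. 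The datum $h_2:=G_0(\cdot,z)|_{\pa D}$ belongs to $Y$: since $\kappa_2^2$ is not a Dirichlet eigenvalue of $-\Delta$ in $D$, the interior problem $\Delta w+\kappa_2^2 w=0$ in $D$, $w=G_0(\cdot,z)$ on $\pa D$, has a solution $w\in H^1(D)$, so $h_2=\g_0 w\in Y$. The radiation condition holds because $\kappa=\kappa_0$ in the far field, and Theorem \ref{theorem1} identifies this $\tilde{u}$, giving $L(h_1,h_2)=G_0(\cdot,z)|_{\G_{b,a}}$.

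For $z\in B_1$ (the case $z\in B_2$ being symmetric) I would use the interior transmission problem (\ref{a24})--(\ref{a25}) with $\Om=B_1$, which, as $\kappa_1^2$ is not a transmission eigenvalue of $B_1$, has a solution $(v,w)$ with $w-v\in H^2(B_1)$. I would then set $\tilde{u}:=G_0(\cdot,z)$ in $\R^2\se(\ov{B_1}\cup\ov{D})$ and $\tilde{u}:=w-v$ in $B_1$. The transmission conditions (\ref{a25}) say precisely that $w-v$ and $G_0(\cdot,z)$ share the same Cauchy data on $\pa B_1$, so $\tilde{u}\in H^1_{\rm loc}$ with no spurious surface source across $\pa B_1$. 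Using $\Delta v+\kappa_1^2 v=0$, $\Delta w+\kappa_2^2 w=0$ and $\kappa=\kappa_2$ in $B_1$, one finds $\Delta\tilde{u}+\kappa^2\tilde{u}=\eta v$ in $B_1$, where $\eta v\in X_1$ since $\Delta(\eta v)+\kappa_1^2(\eta v)=0$; outside $B_1$ the source is $\eta G_0|_{B_2}\in X_2$ in $B_2$ and vanishes elsewhere, so $h_1\in X$, while $h_2:=G_0(\cdot,z)|_{\pa D}\in Y$ exactly as above (here $z\in\R^2_+$ keeps $G_0(\cdot,z)$ regular near $\pa D\subset\R^2_-$). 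For $z\in B_2$ the same recipe with $\tilde{u}=w-v$ in $B_2$ produces the source $\eta w\in X_2$. I expect the genuine difficulty to be concentrated in this step: one must match the Cauchy data of a regular interior field to those of $G_0(\cdot,z)$ on $\pa B_j$ while simultaneously forcing the induced volume source into $X_j$, and it is precisely here that the transmission-eigenvalue hypotheses are indispensable, as they guarantee the solvability and $H^2$-regularity of (\ref{a24})--(\ref{a25}).

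For the necessity I would argue by contradiction, following the continuation scheme already used in Lemma \ref{lemma1}. Assume $z\notin D\cup B_1\cup B_2$, so $z$ lies in one of the far regions $\R^2_+\cap\Om_1$ or $\R^2_-\cap\Om_2$, and suppose $G_0(\cdot,z)|_{\G_{b,a}}=\tilde{u}|_{\G_{b,a}}$. Put $\phi:=\tilde{u}-G_0(\cdot,z)$; since $h_1$ is supported in $\ov{B_1}\cup\ov{B_2}$ and the source of $G_0$ sits at $z$, the function $\phi$ solves the homogeneous Helmholtz equation in $\R^2\se(\ov{B_1}\cup\ov{B_2}\cup\ov{D}\cup\{z\})$, satisfies the radiation condition, and vanishes on $\G_{b,a}$. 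Exactly as in Lemma \ref{lemma1}, analyticity propagates $\phi=0$ from $\G_{b,a}$ to $\G_b$, then to $U_b$, then to all of $\R^2_+\cap\Om_1$; the continuity of the Cauchy data of both $\tilde{u}$ and the two-layered Green's function $G_0$ across the flat part $\G_0\cap\G$ then lets unique continuation carry $\phi=0$ into $\R^2_-\cap\Om_2$, so $\phi\equiv0$ on $\R^2\se(\ov{B_1}\cup\ov{B_2}\cup\ov{D})$ punctured at $z$. But $z$ lies in this open set, where $\tilde{u}$ is smooth while $G_0(\cdot,z)$ carries a logarithmic singularity; the forced identity $\tilde{u}=G_0(\cdot,z)$ near $z$ is then impossible, and this contradiction completes the argument. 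This direction is comparatively routine, relying only on the unique-continuation machinery already assembled for Lemma \ref{lemma1}.
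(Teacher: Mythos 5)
Your proof is correct and takes essentially the same route as the paper's: the necessity direction is the identical continuation-plus-singularity contradiction modeled on Lemma \ref{lemma1}, and the sufficiency direction rests on the same key construction of gluing the interior transmission solution difference $w-v$ (solvable because $\kappa_1^2$ is not a transmission eigenvalue) with $G_0(\cdot,z)$ outside, together with the Dirichlet problem (\ref{a29}) in $D$ to put $h_2$ in $Y$. The only inessential variation is that you solve the ITP just in the bump containing $z$ and keep $\tilde{u}=G_0(\cdot,z)$ elsewhere (so the case $z\in D$ needs no ITP at all), whereas the paper solves the ITP (\ref{a26}) in both $B_1$ and $B_2$ uniformly for every $z\in D\cup B_1\cup B_2$; both yield admissible data $(h_1,h_2)\in X\times Y$ with the same restriction on $\Gamma_{b,a}$.
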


\begin{proof}
We first prove the assertion that $z\in D\cup B_1\cup B_2$ if $G_0(x, z)|_{\G_{b,a}}\in {\rm Range}(L)$.
Assume on the contrary that $z\notin D\cup B_1\cup B_2$. Since $G_0(x, z)|_{\G_{b,a}}\in {\rm Range}(L)$,
there exists some $(\hat{h}_1,\hat{h}_2)\in X\times Y$ such that $L(\hat{h}_1,\hat{h}_2)=G_0(x,z)|_{\G_{b,a}}$.
Let $\hat{u}$ denote the solution to Problem (\ref{a13}) with the data $(h_1,h_2):=(\hat{h}_1,\hat{h}_2)$.
Then $L(\hat{h}_1,\hat{h}_2)=\hat{u}|_{\G_{b,a}}=G_0(x, z)|_{\G_{b,a}}$. A similar argument as in the proof of
Lemma \ref{lemma1} gives that $\hat{u}(x)=G_0(x,z)$ for $x\in\R^2\se\{\ov{D}\cup\ov{B_1}\cup\ov{B_{2}}\cup\{z\}\}$.
It is impossible since $G_0(x,z)$ is singular at $x=z$ and $\hat{u}(x)$ is smooth at $x=z$.
Hence, $z\in D\cup B_1\cup B_2.$

We next prove that $G_0(x,z)|_{\G_{b,a}}\in{\rm Range}(L)$ if $z\in D\cup B_1\cup B_2$. To this end,
we construct a function $\tilde{u}\in H^1_{\rm loc}(\R^2\se\ov{D})$ satisfying Problem (\ref{a13})
with $\tilde{u}(x):=G_0(x,z)$ on $\G_{b,a}$. Since $z\in D\cup B_1\cup B_2$, we consider the following ITP
\be\label{a26}
\left\{\begin{array}{llll}
\Delta v_j+\kappa_1^2v_j=0& \qquad {\rm in}\; B_j,\\ [1mm]
\Delta w_j+\kappa_2^2w_j=0& \qquad {\rm in}\; B_j,\\ [1mm]
w_j-v_j=q_{1j} &\qquad {\rm on}\;\pa B_j,\\ [1mm]
\frac{\pa w_j}{\pa\nu}-\frac{\pa v_j}{\pa \nu}=q_{2j}&\qquad{\rm on}\; \pa B_j
\end{array}\right.
\en
for $j=1,2$, with the boundary data $q_{1j}:=G_0(\cdot,z)|_{\pa B_j}$ and
$q_{2j}:=\pa_\nu G_0(\cdot,z)|_{\pa B_j}$, where $\nu$ denotes the outward unit normal vector to $\pa B_j$.
Since $\kappa^2_1$ is not an eigenvalue, there exists a unique solution $(v_j,w_j)\in L^2(B_j)\times L^2(B_j)$
to the ITP (\ref{a26}) with $w_j-v_j\in H^2(B_j)$. Now we construct $\tilde{u}$ as follows
\be\label{a27}
\tilde{u}:=\left\{\begin{array}{l}
              G_0(\cdot,z)\qquad\; \textrm{in}\; \R^2_+\cap\Omega_1, \\ [1mm]
              w_j-v_j \quad\;\;\;\;\; \textrm{in}\; B_j,\\ [1mm]
              G_0(\cdot,z) \qquad\; \textrm{in}\; \R^2_-\cap\Omega_2\se\ov{D}.
            \end{array}
\right.
\en
A direct calculation yields that $\tilde{u}\in H^1_{\rm loc}(\R^2\se\ov{D})$ satisfies
Problem (\ref{a13}) with the data
\ben\label{a28}
h_1=\eta(\chi_1v_1+\chi_2w_2)\quad {\rm and}\quad h_2=G_0(\cdot,z)|_{\pa D}.
\enn
We claim that $(h_1,h_2)\in X\times Y$. First it is seen by the Helmholtz equations for $v_1$ and $w_2$
that $h_1\in X$ if $z\in D\cup B_1\cup B_2$. To show that $h_2\in Y$, we have to distinguish
two cases that $z\in B_1\cup B_2$ and $z\in D$, respectively.

If $z\in B_1\cup B_2$, it is clear that $G_0(\cdot,z)\in H^1(D)$ solves
$\Delta G_0(\cdot,z)+\kappa_2^2G_0(\cdot,z)=0$ in $D$, which implies that $ h_2=G_0(\cdot,z)|_{\pa D}\in Y$.
If $z\in D$, we consider the following Dirichlet problem
\be\label{a29}
\left\{\begin{array}{ll}
\Delta \phi+\kappa_2^2\phi=0& \qquad {\rm in}\quad D,\\ [2mm]
\phi=G_0(\cdot,z) &\qquad {\rm on}\quad\pa D.\\
\end{array}\right.
\en
Under the assumption that $\kappa_2^2$ is not a Dirichlet eigenvalue for $-\Delta$ in $D$,
Problem (\ref{a29}) has a unique solution $\phi\in H^1(D)$. This leads to the result
that $h_2=G_0(\cdot,z)|_{\pa D}\in Y$.
Finally, it follows from (\ref{a27}) that $G_0(x, z)|_{\G_{b,a}}\in {\rm Range}(L)$
if $z\in D\cup B_1\cup B_2$. This ends the proof.
\end{proof}

We conclude this section with the investigation of the asymptotic behavior of $(v_j, w_j)$ as $z$
approaches the boundary $\pa B_j\cap\G$ from interior of $B_j (j=1,2)$.
For $z^*\in\pa B_j\cap \G$, we choose $\delta_1>0$ to be small enough such that
\ben
z_n:=z^*-\frac{\delta_1}{n}\nu(z^*)\in B_j,\qquad j=1,2,
\enn
for all $n\in\N$. Let $(v_{j,z_n},w_{j,z_n})\in L_{\Delta}^2(B_j)\times L_{\Delta}^2(B_j)$ be the 
solution to the ITP (\ref{b6}) with $z=z_n$. Then we have the following lemma.

\begin{lemma}\label{lemma3}
If $\kappa_1^2$ is neither a transmission eigenvalue of $B_1$ nor that of $B_2$, then we have
\ben\label{a30}
\lim\limits_{n\rightarrow +\infty}\|w_{j,z_n}\|_{L^2(B_j)}=+\infty,\quad\;\;
\lim\limits_{n\rightarrow +\infty}\|v_{j,z_n}\|_{L^2(B_j)}=+\infty
\enn
for $j=1,2$.
\end{lemma}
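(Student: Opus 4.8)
The plan is to prove the blow-up of the interior transmission solution by contradiction, exploiting that the boundary data $G_0(\cdot,z_n)$ becomes singular as $z_n \to z^* \in \pa B_j\cap\G$. First I would suppose, for contradiction, that one of the norms stays bounded along a subsequence, say $\|w_{j,z_n}\|_{L^2(B_j)}\leq C$ for all $n$. Since the transmission conditions in (\ref{a26}) read $w_j - v_j = G_0(\cdot,z_n)$ and $\pa_\nu w_j - \pa_\nu v_j = \pa_\nu G_0(\cdot,z_n)$ on $\pa B_j$, and since $w_{j,z_n}$ solves $\Delta w_{j,z_n}+\kappa_2^2 w_{j,z_n}=0$ in $B_j$, a uniform $L^2$ bound on $w_{j,z_n}$ combined with the Helmholtz equation would furnish a uniform bound in $L^2_\Delta(B_j)$, hence (by the trace estimates recalled in Section \ref{sec2}) uniform control of its Cauchy data $(\g_0 w_{j,z_n},\g_1 w_{j,z_n})$ in $H^{-1/2}(\pa B_j)\times H^{-3/2}(\pa B_j)$. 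The key step is then to extract a weakly convergent subsequence and pass to the limit in the pair $(v_{j,z_n},w_{j,z_n})$.

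The crux of the argument is to track what happens to $v_{j,z_n}$. From the transmission conditions, the Cauchy data of $v_{j,z_n}$ on $\pa B_j$ equals that of $w_{j,z_n}$ minus the Cauchy data of $G_0(\cdot,z_n)$. The point is that as $z_n\to z^*$, the fundamental solution $G_0(\cdot,z_n)$ and its normal derivative develop a genuine singularity concentrated at the boundary point $z^*\in\pa B_j$: in particular $\|G_0(\cdot,z_n)\|_{H^{1/2}(\pa B_j)}$ (or the relevant trace norm) blows up, reflecting that $z^*$ lies on $\pa B_j$ itself. If $w_{j,z_n}$ stays bounded, then the Cauchy data of $v_{j,z_n}$ on $\pa B_j$ must inherit this blow-up. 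Since $v_{j,z_n}$ solves $\Delta v_{j,z_n}+\kappa_1^2 v_{j,z_n}=0$, a uniform bound on $\|v_{j,z_n}\|_{L^2(B_j)}$ would again yield uniform control of its boundary Cauchy data, contradicting the blow-up forced by the singular data. This is the mechanism that simultaneously drives both $\|w_{j,z_n}\|_{L^2(B_j)}$ and $\|v_{j,z_n}\|_{L^2(B_j)}$ to infinity.

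The hard part will be making the blow-up of the data rigorous while correctly using the hypothesis that $\kappa_1^2$ is not a transmission eigenvalue. The non-eigenvalue assumption guarantees, via the result cited from \cite{CGH10}, that the solution operator $(q_{1j},q_{2j})\mapsto(v_{j,z_n},w_{j,z_n})$ is bounded and, crucially, that the homogeneous ITP has only the trivial solution; this rules out the degenerate scenario where a uniformly bounded $(v_{j,z_n},w_{j,z_n})$ could converge to a nontrivial solution of the homogeneous problem and thereby absorb the singularity. Concretely, I would argue that if both norms remained bounded, weak limits $(v_*,w_*)$ would satisfy the homogeneous transmission conditions $w_*-v_*=0$, $\pa_\nu w_*-\pa_\nu v_*=0$ on $\pa B_j$ (because the singular data escapes to a boundary point and its contribution, suitably tested, cannot be captured by the limit of $L^2$-bounded Helmholtz solutions), forcing $(v_*,w_*)=0$ by the non-eigenvalue hypothesis; one then derives a quantitative contradiction with the magnitude of the data near $z^*$.

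I expect the main obstacle to be the precise quantification of how $G_0(\cdot,z_n)$ and $\pa_\nu G_0(\cdot,z_n)$ blow up in the appropriate boundary norms as $z_n$ approaches the boundary point $z^*$ transversally, and in verifying that the decomposition $G_0 = \Psi^{(1)}+\Phi_1$ (respectively $\Psi^{(2)}$) across $\G_0$ does not introduce cancellations near $z^*$ that would spoil the singularity; the smooth part $\Psi^{(j)}\in C^\infty$ is harmless, so the blow-up is entirely carried by the free-space term $\Phi_1(\cdot,z_n)$ and its normal derivative, whose logarithmic and Hankel-type singularities near $z^*$ are standard but must be measured in the trace spaces dictated by the $L^2_\Delta$ framework.
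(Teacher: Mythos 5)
There is a genuine gap, and it lies exactly at the step you flag as ``the hard part'': the norms do not match up, and in the norms you actually control there is no blow-up at all. An $L^2(B_j)$ bound on a Helmholtz solution gives, via the $L^2_\Delta$ trace theory of Section \ref{sec2}, control of its Cauchy data only in $H^{-1/2}(\pa B_j)\times H^{-3/2}(\pa B_j)$. But the data $G_0(\cdot,z_n)$ does \emph{not} blow up in these norms as $z_n\to z^*$: the logarithmic singularity of $\Phi_{\kappa_1}(\cdot,z_n)$ is square-integrable on the curve $\pa B_j$ uniformly in $n$ (indeed $|\ln\sqrt{s^2+d_n^2}|\le|\ln|s||$ near $z^*$), so $\|G_0(\cdot,z_n)\|_{L^2(\pa B_j)}$, a fortiori $\|G_0(\cdot,z_n)\|_{H^{-1/2}(\pa B_j)}$, stays bounded; likewise $\pa_\nu G_0(\cdot,z_n)$ behaves like an approximate delta at $z^*$, which is bounded in $H^{-3/2}(\pa B_j)$ (it is unbounded only in $H^{-1/2}$ and stronger norms). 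The Dirichlet trace blows up only in $H^{1/2}(\pa B_j)$, a norm you cannot reach from $L^2$ bounds on $v_{j,z_n}$ and $w_{j,z_n}$ separately. Consequently, assuming both $\|w_{j,z_n}\|_{L^2}$ and $\|v_{j,z_n}\|_{L^2}$ bounded produces no contradiction by your mechanism: bounded solutions with $H^{-1/2}$-bounded traces are perfectly compatible with the transmission condition $v=w-G_0(\cdot,z_n)$ on $\pa B_j$. Your fallback weak-limit argument has the same defect and an additional error: since the data converges (in the weak topologies just described) to the nonzero data $G_0(\cdot,z^*)$, the limit pair would satisfy an \emph{inhomogeneous} ITP, not the homogeneous one, so the non-eigenvalue hypothesis cannot be invoked to conclude $(v_*,w_*)=0$.

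The paper's proof is built precisely to escape this trap, and it needs two ideas absent from your proposal. First, a mirror-point correction: with $y_n:=z^*+\frac{\delta_1}{n}\nu(z^*)$ outside $B_1$, one replaces $v_{1,z_n}$ by $\widetilde v_{1,z_n}:=v_{1,z_n}-G_0(\cdot,y_n)$, so that the new Neumann data $g_{2,n}=\pa_\nu G_0(\cdot,z_n)+\pa_\nu G_0(\cdot,y_n)$ enjoys the classical cancellation of singularities (\cite[Lemma 4.2]{CKM07}) and is uniformly bounded in $L^\infty(\pa B_1)\hookrightarrow H^{-1/2}(\pa B_1)$, while the Dirichlet data $g_{1,n}=G_0(\cdot,z_n)+G_0(\cdot,y_n)$ remains bounded in $H^{-1/2}$ but \emph{adds} the two logarithmic singularities, hence blows up in $H^{1/2}(\pa B_1)$. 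Second, one works with the \emph{difference} $\widetilde w:=w_{1,z_n}-\widetilde v_{1,z_n}$, which solves the well-posed impedance problem (\ref{a34}) with source $\eta\, w_{1,z_n}$ and boundary data $g_{2,n}+{\rm i}g_{1,n}$; the assumed $L^2$ bound on $w_{1,z_n}$ then yields a uniform $H^1(B_1)$ bound on $\widetilde w$, hence a uniform $H^{1/2}(\pa B_1)$ bound on its trace. Since that trace equals $g_{1,n}$, whose $H^{1/2}$ norm tends to infinity, one gets the contradiction. In short: only the difference $w-v$ (suitably corrected) has enough regularity to see the $H^{1/2}$ norm in which the singularity actually manifests, and that is the key structural idea your proposal is missing.
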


\begin{proof}
We only prove the first equality for $j=1$. The other cases can proved similarly.

Assume on the contrary that $\|w_{j,z_n}\|_{L^2}$ are uniformly bounded for all $n\in\N$.
Then there exists a positive constant $C>0$, independent of $n$, such that
\be\label{a31}
\|w_{1,z_n}\|_{L^2(B_1)}\leq C.
\en
Define $\widetilde{v}_{1,z_n}:=v_{1,z_n}-G_0(\cdot, y_n)$ with $y_n:=z^*+\frac{\delta_1}{n}\nu(z^*)$ for
sufficiently small $\delta_1>0$.  A direct calculation implies that $(\widetilde{v}_{1,z_n}, w_{1,z_n})$
solves the ITP
\be\label{a32}
\left\{\begin{array}{llll}
\Delta \widetilde{v}_{1,z_n}+\kappa_1^2\widetilde{v}_{1,z_n}=0 & \qquad {\rm in}\; B_1,\\ [1mm]
\Delta w_{1,z_n}+\kappa_2^2w_{1,z_n}=0& \qquad {\rm in}\;B_1,\\ [1mm]
w_{1,z_n}-\widetilde{v}_{1,z_n}=g_{1,n} &\qquad {\rm on}\;\pa B_1,\\ [1mm]
\frac{\pa w_{1,z_n}}{\pa\nu}-\frac{\pa \widetilde{v}_{1,z_n}}{\pa \nu}=g_{2,n}&\qquad{\rm on}\; \pa B_1,
\end{array}\right.
\en
where
\begin{eqnarray*}\label{a33}
g_{1,n}:=G_0(\cdot,z_n)+G_0(\cdot, y_n) \quad{\rm and}\quad g_{2,n}:=\frac{\pa G_0(\cdot,z_n)}{\pa\nu}
+\frac{\pa G_0(\cdot, y_n)}{\pa\nu}.
\end{eqnarray*}
Define $\widetilde{w}:=w_{1,z_n}-\widetilde{v}_{1,z_n}$. It is easily verified by (\ref{a32}) that
$\widetilde{w}$ satisfies
\be\label{a34}
\left\{\begin{array}{ll}
 \Delta \widetilde{w}+\kappa_1^2\widetilde{w}=\eta w_{1,z_n}& \qquad {\rm in}\; B_1,\\ [2mm]
\frac{\pa \widetilde{w}}{\pa\nu}+{\rm i}\widetilde{w}=g_{2,n}+{\rm i}g_{1,n}&\qquad{\rm on}\; \pa B_1.
\end{array}\right.
\en
Standard discussions show that there exists a unique solution $\widetilde{w}\in H^1(B_1)$ to Problem(\ref{a34})
such that
\be\label{a35}
\|\widetilde{w}\|_{H^1(B_1)}\leq C\left(\|w_{1,z_n}\|_{L^2(B_1)}+\|g_{1,n}\|_{H^{-1/2}(\partial B_1)}+\|g_{2,n}\|_{H^{-1/2}(\partial B_1)}\right).
\en
In view of (\ref{a31}), we conclude $\|w_{1,z_n}\|_{L^2(B_1)}<C$ uniformly for $n\in\N$.
Now we are at a position to prove $\|g_{1,n}\|_{H^{-1/2}(\partial B_1)}<C$ uniformly for $n\in\N$.

Noting that
\ben
g_{1,n}(x)&=&G_0(x,z_n)+G_0(x, y_n)\\
&=&\Phi_{\kappa_1}(x,z_n)+\Phi_{\kappa_1}(x,y_n)+\Psi^{(1)}(x,z_n)+\Psi^{(1)}(x,y_n)
\enn
and $\Psi^{(1)}(x,\hat{z})\in C^{\infty}(\R^2\times\R^2\se\{\hat{z}\}\times\{\hat{z}\})$ for $\hat{z}=z_n$
or $\hat{z}=y_n$, then it is enough to show that
$\tilde{g}_{1,n}:=\Phi_{\kappa_1}(x,z_n)+\Phi_{\kappa_1}(x,y_n)
=(\Phi_{\kappa_1}(x,z_n)-\Phi_{\kappa_1}(x,y_n))+2\Phi_{\kappa_1}(x,y_n)$
are uniformly bounded in the $H^{-1/2}$-norm for all $n\in\N$. It follows from the Taylor expansion
for $\Phi_{\kappa_1}(\cdot,y_n)$ and $\Phi_{\kappa_1}(\cdot,z_n)$ that
\ben
\Phi_{\kappa_1}(z^*,z_n)-\Phi_{\kappa_1}(z^*,y_n)=O(t^2\ln\frac{1}{t})
\enn
as $t:=|y_n-z^*|\to 0$. Hence, $\Phi_{\kappa_1}(x,z_n)-\Phi_{\kappa_1}(x,y_n)$ is continuous at $x=z^*$.
It remains to show that $\|\Phi_{\kappa_1}(x,y_n)\|_{H^{-1/2}(\partial B_1)}<C$ uniformly for all $n\in\N$.

Since $y_n\notin B_1$, it is found that $\Phi_{\kappa_1}(\cdot,y_n)$ solves the Helmholtz equation
$\Delta\Phi_{\kappa_1} +\kappa^2_1\Phi_{\kappa_1}=0$ in $B_1$, which implies that $\Phi_{\kappa_1}(\cdot,y_n)$
are uniformly bounded in the norm of $L^2_\Delta(B_1)$ for all $n\in\N$. Using the trace theorem leads to
the result that $\|\Phi_{\kappa_1}(x,y_n)\|_{H^{-1/2}(\partial B_1)}<C$ uniformly for all $n\in\N$.

For the third term in (\ref{a35}), it follows from the proof of \cite[Lemma 4.2]{CKM07} and
the bounded embedding of $L_\infty$ into $H^{-1/2}$ that
\ben
\|g_{2,n}\|_{H^{-1/2}(\pa B_1)}\leq C\|g_{2,n}\|_{L_\infty(\pa B_1)}\le C
\enn
uniformly for $n\in\N$. Combining the above inequalities with (\ref{a31}) and (\ref{a35}) and using
the trace theorem conclude that
\ben
\|\widetilde{w}\|_{H^{1/2}(\pa B_1)}\leq C\|\widetilde{w}\|_{H^1(B_1)}\leq C
\enn
for a fixed constant $C>0$, whence the uniform boundedness of $\|g_{1,n}\|_{H^{1/2}(\partial B_1)}$
follows from the transmission condition of (\ref{a32}). This is a contradiction since
\ben
\|g_{1,n}\|_{H^{1/2}(\pa B_1)}=\|G_0(\cdot,z_n)+G_0(\cdot,y_n)\|_{H^{1/2}(\pa B_1)}\to\infty\quad{\rm as\;}
n\to\infty.
\enn
The proof is thus complete.
\end{proof}

\section{The linear sampling method}\label{sec4}
\setcounter{equation}{0}

Based on the above analysis for Problem (\ref{a13}),  the objective of this section is to propose a 
sampling-type method to simultaneously reconstruct the local perturbation of the interface $\G$ and the 
buried obstacle $D$ from the wave-field measurements $u^{\rm s}(\cdot,y)|_{\G_{b,a}}$ generated by the 
incident source $u^{\rm i}(\cdot)=\Phi_{\kappa_1}(\cdot, y)$ for $y\in\G_{b,a}$.
Then we have the following the near-field operator $N: L^2(\G_{b,a})\rightarrow L^2(\G_{b,a})$ in the form
\ben\label{b2}
Ng(x): = \int_{\G_{b,a}}(u^{\rm s}(x,y)-G_0^{\rm s}(x,y))g(y){\rm d}s(y) \qquad {\rm for}\;\;x\in\G_{b,a},
\enn
where $G_0^{\rm s}(x,y)=G_0(x,y)-\Phi_{\kappa_1}(x,y)$ is the scattered field to Problem (\ref{a9}) associated 
with the interface $\G_0$ and the incident field $\Phi_{\kappa_1}(\cdot, y)$ located at $y\in\G_{b,a}$. 
It is observed that the kernel of $N$ is the unique solution of Problem (\ref{a13}).
Then our sampling method will be based on studying the solvability of the following integral equation of the 
first kind
\be\label{b3}
Ng(\cdot)=G_0(\cdot,z)\qquad {\rm for}\; z\in\R^2,
\en
where $z$ is a sample point belonging to a rectangular domain which contains local perturbations of $\G$ and $D$.
Similar to the bounded obstacle case, it is expected to define an indicator function $I(z)$ by the $L^2$-norm of the solution to equation (\ref{b3}), which can be used to recover all local perturbations $B$ and the buried obstacle $D$.

By the superposition principle, it is known that $N$ corresponds to the incidence operator $H:=(H_1, H_2): L^2(\G_{b,a})\rightarrow X\times Y$, defined by
\ben\label{b4}
&&H_1g(x):=\eta(\chi_1(x)-\chi_2(x))\int_{\G_{b,a}}G_0(x,y)g(y){\rm d}s(y),\;\;{\rm for}\;\;x\in B_1\cup B_2,\\\label{b5}
&&H_2g(x):=-\int_{\G_{b,a}}G_0(x,y)g(y){\rm d}s(y),\qquad\qquad\qquad\quad{\rm for}\;\; x\in\pa D.
\enn
Therefore, $N=LH$. Furthermore, we have the  following denseness result related to $N$.

\begin{lemma}\label{lemma4}
If $\kappa_2^2$ is not a Dirichlet eigenvalue of $-\Delta$ in $D$, then ${\rm Range}(N)$ and ${\rm Range}(L)$
are dense in $L^2(\G_{b,a})$.
\end{lemma}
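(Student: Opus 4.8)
The plan is to establish density of $\mathrm{Range}(L)$ first, and then deduce density of $\mathrm{Range}(N)$ from the factorization $N=LH$ by showing that $\mathrm{Range}(H)$ is dense in $X\times Y$. Since both $L^2(\G_{b,a})$ and the target spaces are Hilbert spaces, the natural tool throughout is the standard duality characterization: a bounded operator has dense range if and only if its adjoint is injective. So for each of the two density claims I would identify the relevant adjoint and prove it has trivial kernel.

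For the density of $\mathrm{Range}(L)$, I would suppose $\varphi\in L^2(\G_{b,a})$ annihilates $\mathrm{Range}(L)$, i.e. $(L(h_1,h_2),\varphi)_{L^2(\G_{b,a})}=0$ for all $(h_1,h_2)\in X\times Y$, and aim to conclude $\varphi=0$. The idea is to introduce the adjoint field $v$ defined as the solution of Problem (\ref{a13})-type scattering problem generated by the source $\overline{\varphi}$ distributed on $\G_{b,a}$ (a single-layer-type potential over $\G_{b,a}$ with density $\varphi$). Writing out $(L(h_1,h_2),\varphi)$ using the definition $L(h_1,h_2)=\tilde u|_{\G_{b,a}}$ and applying Green's second identity over a large ball $B_R\setminus\overline D$, the boundary terms on $\partial B_R$ vanish by the radiation condition, and the remaining terms pair $v$ against $h_1$ over $B_1\cup B_2$ and against $\partial_\nu v$ against $h_2$ over $\partial D$. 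Because $h_1$ ranges over all of $X$ and $h_2$ over all of $Y$, the vanishing of this pairing forces $v$ and its relevant traces to vanish on $B_1$, $B_2$ and $\partial D$; unique continuation together with the analyticity argument already used in Lemma \ref{lemma1} then propagates $v\equiv 0$ up to $\G_{b,a}$, and a jump relation for the layer potential identifies the density $\varphi$ with a jump of $v$ across $\G_{b,a}$, yielding $\varphi=0$.

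For the density of $\mathrm{Range}(N)$, I would use $N=LH$. Since $\mathrm{Range}(L)$ is already dense, it suffices to show $\mathrm{Range}(H)$ is dense in $X\times Y$; then $\mathrm{Range}(N)=\mathrm{Range}(LH)\supseteq L(\overline{\mathrm{Range}(H)})=L(X\times Y)$ has dense closure in $L^2(\G_{b,a})$. To prove $H=(H_1,H_2)$ has dense range I again pass to the adjoint: suppose $(p,\psi)\in X\times Y$ is orthogonal to $\mathrm{Range}(H)$, so $(H_1 g,p)_{X}+(H_2 g,\psi)_{Y}=0$ for all $g\in L^2(\G_{b,a})$. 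Substituting the explicit kernels (\ref{b4})-(\ref{b5}) and interchanging the order of integration, this reads $\int_{\G_{b,a}} g(y)\,\overline{w(y)}\,\mathrm{d}s(y)=0$ for all $g$, where $w(y)$ is the value on $\G_{b,a}$ of a field built from $G_0(\cdot,y)$ tested against $p$ over $B_1\cup B_2$ and against $\psi$ over $\partial D$. Hence $w=0$ on $\G_{b,a}$. Recognizing $w$ as (up to conjugation) a solution of a homogeneous scattering/transmission problem evaluated on $\G_{b,a}$, the same unique-continuation chain shows $w\equiv 0$ throughout the exterior, and then the interior transmission structure together with the hypothesis that $\kappa_2^2$ is not a Dirichlet eigenvalue of $-\Delta$ in $D$ forces $p=0$ and $\psi=0$.

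The main obstacle will be the adjoint-field step for $\mathrm{Range}(L)$: one must construct the correct dual scattering problem associated with the functional $(\,\cdot\,,\varphi)_{L^2(\G_{b,a})}$, carry out the Green's-identity computation cleanly across the two interface pieces $B_1,B_2$ and the obstacle boundary $\partial D$ simultaneously, and verify that the pairing against the full spaces $X$ and $Y$ (rather than against classical smooth test functions) really does annihilate $v$ in the interior. The eigenvalue hypotheses enter precisely to guarantee unique solvability of the transmission and Dirichlet problems that appear when backing out $p$ and $\psi$ from their traces; everything else is a repetition of the unique-continuation argument established in Lemma \ref{lemma1}.
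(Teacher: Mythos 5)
Your overall strategy is inverted relative to what the hypotheses allow, and the inversion creates a genuine gap exactly at the step you flag as the main obstacle. The claim that vanishing of the pairing $\int_{B_1\cup B_2} v\,h_1\,{\rm d}x+\int_{\pa D}h_2\,\pa_\nu v\,{\rm d}s=0$ for all $(h_1,h_2)\in X\times Y$ ``forces $v$ and its relevant traces to vanish'' is false: $X_j$ is a proper closed subspace of $L^2(B_j)$ whose orthogonal complement is huge---by Green's second identity it contains $(\Delta+\kappa_j^2)\psi$ for every $\psi\in H_0^2(B_j)$---so orthogonality of $v$ to all of $X_j$ does not annihilate $v$ on $B_j$. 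To extract anything, you must use that the dual field $v$ satisfies the Helmholtz equation in $B_j$ with the \emph{other} wavenumber (in $B_1\subset\Om_2$ the medium wavenumber is $\kappa_2$, while $X_1$ is built from $\kappa_1$); the orthogonality condition then becomes a homogeneous interior transmission problem in $B_j$, and ruling out its nontrivial solutions requires that $\kappa_1^2$ not be a transmission eigenvalue of $B_1$ and $B_2$---a hypothesis Lemma \ref{lemma4} does not make. So your route to density of ${\rm Range}(L)$ either fails or proves a weaker statement under extra assumptions. (There is also a set-theoretic slip at the end: ${\rm Range}(LH)\supseteq L(\ov{{\rm Range}(H)})$ is not true; the correct deduction is the standard $\vep/2$-argument using boundedness of $L$.)

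The paper proceeds in the opposite order and thereby avoids the adjoint of $L$ altogether. It first proves ${\rm Range}(N)$ dense by showing $N^*$ is injective, which is tractable because the kernel of $N$ is explicit, namely $u(x,y)-G_0(x,y)$. If $N^*\varphi=0$, then $w_1(y)=\int_{\G_{b,a}}(u(y,x)-G_0(y,x))\ov{\varphi(x)}\,{\rm d}s(x)$ vanishes on $\G_{b,a}$, hence---by the unique-continuation chain of Lemma \ref{lemma1}, which for this part needs no eigenvalue hypothesis---in $\R^2\se(\ov{B_1}\cup\ov{B_2}\cup\ov{D})$; the Dirichlet condition $u(\cdot,x)=0$ on $\pa D$ converts this into $w_2(y)=-\int_{\G_{b,a}}G_0(y,x)\ov{\varphi(x)}\,{\rm d}s(x)=0$ on $\pa D$; the hypothesis that $\kappa_2^2$ is not a Dirichlet eigenvalue gives $w_2=0$ in $D$; analytic continuation gives $w_2=0$ off $\G_{b,a}$; and the jump relation of the single-layer potential gives $\varphi=0$. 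Density of ${\rm Range}(L)$ is then immediate (the paper phrases it as a contradiction), since ${\rm Range}(N)=L({\rm Range}(H))\subseteq{\rm Range}(L)$. Note finally that your second half---density of ${\rm Range}(H)$---is the paper's Theorem \ref{theorem2}, proved separately and not needed for Lemma \ref{lemma4} at all; the duality argument succeeds there precisely because the pairing lives on the boundaries $\pa B_j$ and $\pa D$ rather than on the volumes $B_j$, which is what sidesteps the orthogonal-complement problem your version runs into.
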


\begin{proof}
To prove the lemma it is enough to show that the adjoint operator $N^*$ of $N$, given by
\begin{eqnarray}\label{b6}
(N^*\varphi)(y)=\int_{\G_{b,a}}(\ov{u(y,x)}-\ov{G_0(y,x)})\varphi(x){\rm d}s(x)
\quad{\rm for}\;\;\varphi\in L^2(\Gamma_{b,a}),
\end{eqnarray}
is injective on $L^2(\G_{b,a})$. Note that the symmetric relations $u(y,x)=u(x,y)$ and $G_0(y,x)=G_0(x,y)$
for $x,y\in\G_{b,a}$ have been used in deriving (\ref{b6}), which can be obtained by a similar argument
as in \cite{LYZ18}.

By (\ref{b6}) we can define the functions $w_1$ and $w_2$ by
\begin{eqnarray*}\label{b8}
w_1(y)&=&\int_{\G_{b,a}}(u(y,x)-G_0(y,x))\overline{\varphi(x)}{\rm d}s(x)\quad{\rm for }\; x\in \R^2\se\overline{D},\\\label{b9}
w_2(y)&=&-\int_{\G_{b,a}}G_0(y,x)\overline{\varphi(x)}{\rm d}s(x)\qquad\quad\qquad{\rm for }\; x\in \R^2.
\end{eqnarray*}
Let $N^*\varphi = 0$ on $\G_{b,a}$ for some $\varphi\in L^2(\G_{b,a})$.
It then holds that $w_1=0$ on $\G_{b,a}$. An analogous argument with the proof of Lemma \ref{lemma1}
shows that $w_1=0$ in $\R^2\se{(\overline{B_1}\cup\overline{B_2}\cup\overline{D})}$. Thus
we have $w_1=0$ on $\pa D$, and so $w_2=0$ on $\pa D$ by the fact that $u(y,x)=0$ for $y\in\pa D$
and $x\in\G_{b,a}$. Since $\kappa_2^2$ is not a Dirichlet eigenvalue of $-\Delta$ in $D$, it is deduced
that $w_2 =0$ in $D$. Hence, we have $w_2=0$ in $\R^2\se\ov{\G_{b,a}}$ by analytic continuation.
Using the jump relation of the layer operator yields $\varphi=0$, which means that
$N^*$ is injective on $L^2(\G_{b,a})$, implying that $Range(N)$ is dense in $L^2(\G_{b,a})$.

Next we show the denseness of ${\rm Range}(L)$ in $L^2(\G_{b,a})$ by contradiction.
Assume on the contrary that there exists some $g_0\in L^2(\G_{b,a})$ and $\epsilon_0>0$ such that
\begin{eqnarray*}\label{b10}
\|L(h_1,h_2)-g_0\|_{L^2(\G_{b,a})}\geq\epsilon_0
\end{eqnarray*}
for all $(h_1,h_2)\in X\times Y$. Recalling $Hg\in X\times Y$ for $g\in L^2(\G_{b,a})$ and $N=LH$, we thus obtain
\begin{eqnarray*}\label{b11}
\|Ng-g_0\|_{L^2(\G_{b,a})}=\|LHg-g_0\|_{L^2(\G_{b,a})}\geq \epsilon_0.
\end{eqnarray*}
This contradicts the fact that $Range(N)$ is dense in $L^2(\G_{b,a})$. The proof is thus complete.
\end{proof}

\begin{theorem}\label{theorem2}
If $\kappa_2^2$ is not a Dirichlet eigenvalue for $-\Delta$ in $D$, then ${\rm Range}(H)$ is dense in $X\times Y$.
\end{theorem}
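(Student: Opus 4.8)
The plan is to prove the equivalent statement that $H$ has dense range by showing that its adjoint $H^*\colon X\ti Y\to L^2(\G_{b,a})$ is injective, exactly as denseness of ${\rm Range}(N)$ was reduced to injectivity of $N^*$ in Lemma \ref{lemma4}. First I would compute $H^*$ explicitly. Fix $(v,\phi)\in X\ti Y$ and write $v=\chi_1v_1+\chi_2v_2$ with $v_j\in X_j$. Because every $v_j\in X_j$ satisfies $\Delta v_j=-\kappa_j^2v_j$, the inner product $(\cdot,\cdot)_{L^2_{\Delta}(B_j)}$ restricted to $X_j$ collapses to the nonzero multiple $(1+\kappa_j^4)(\cdot,\cdot)_{L^2(B_j)}$. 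Substituting the definitions of $H_1,H_2$, using the reciprocity $G_0(x,y)=G_0(y,x)$ and interchanging the order of integration, I obtain $\langle Hg,(v,\phi)\rangle_{X\ti Y}=\int_{\G_{b,a}}g(y)\,\ov{w(y)}\,ds(y)$ (so that $H^*(v,\phi)=\ov{w}|_{\G_{b,a}}$), where $w$ is the combined volume-and-single-layer potential
\ben
w(y)&=&c_1\Int_{B_1}G_0(y,x)\ov{v_1(x)}\,dx-c_2\Int_{B_2}G_0(y,x)\ov{v_2(x)}\,dx\\
&&-\Int_{\pa D}G_0(y,x)\ov{\phi(x)}\,ds(x),
\enn
with the nonzero constants $c_j:=\eta(1+\kappa_j^4)$ and $\eta=\kappa_1^2-\kappa_2^2\neq0$; here the last term is the single-layer potential whose density vanishes precisely when $\phi=0$. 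Injectivity of $H^*$ thus amounts to showing that $w|_{\G_{b,a}}=0$ forces $v=0$ and $\phi=0$.

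Assume $w=0$ on $\G_{b,a}$. As a function of $y$, $w$ is a radiating solution of the two-layered Helmholtz equation in $\R^2\se\ov{B_1\cup B_2\cup D}$, since the three integrals place sources only inside $B_1$, $B_2$ and on $\pa D$. I would now reuse the unique-continuation chain of Lemma \ref{lemma1} verbatim: the real-analyticity of $w$ propagates the vanishing from $\G_{b,a}$ to the whole line $\G_b$; uniqueness for the radiating Dirichlet problem gives $w=0$ in $U_b=\{x_2>b\}$; and analytic continuation together with the continuity of the Cauchy data $(w,\pa_\nu w)$ across the flat part $\G_0\cap\G$ (built into $G_0$) extends this to $w=0$ throughout $\R^2\se\ov{B_1\cup B_2\cup D}$.

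Next I treat the obstacle and the perturbations separately, exploiting that $\ov{B_1}$, $\ov{B_2}$ and $\ov{D}$ are mutually disjoint. Near $\pa D$ the two volume potentials are smooth, so $w$ equals, up to a smooth term, a single-layer potential on $\pa D$; its exterior trace vanishes, hence $w|_D$ solves $\Delta w+\kappa_2^2w=0$ in $D$ with zero Dirichlet data. As $\kappa_2^2$ is not a Dirichlet eigenvalue of $-\Delta$ in $D$, this yields $w=0$ in $D$, so both one-sided normal derivatives on $\pa D$ vanish and the jump relation forces the single-layer density, hence $\phi$, to be zero. For the perturbations, continuity of the volume potential and of its gradient across $\pa B_1$, combined with $w\equiv0$ in the exterior, gives $w=\pa_\nu w=0$ on $\pa B_1$ from inside. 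In $B_1$ one has $\Delta w+\kappa_1^2w=-c_1\ov{v_1}$ while $v_1\in X_1$ satisfies $\Delta v_1+\kappa_1^2v_1=0$, so Green's second identity applied to the pair $(w,v_1)$ makes the boundary terms drop out and collapses the interior terms to $c_1\int_{B_1}|v_1|^2\,dx=0$; since $c_1\neq0$ this gives $v_1=0$, and the identical computation over $B_2$ gives $v_2=0$. Hence $(v,\phi)=0$, $H^*$ is injective, and ${\rm Range}(H)$ is dense in $X\ti Y$.

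The main obstacle I anticipate is the first step — the explicit and correct computation of $H^*$. Two points need care there: reducing the awkward $L^2_{\Delta}$-pairing on each $X_j$ to an ordinary $L^2(B_j)$-pairing, and identifying the $Y$-contribution as a genuine single-layer potential on $\pa D$ so that the jump relations used in the obstacle step are legitimate (this is where the precise inner product on $Y$ enters, though only up to an isomorphism of densities). By contrast, the propagation of the vanishing of $w$ and the final Green's-identity/eigenvalue arguments are routine once the potential representation of $H^*$ is in hand.
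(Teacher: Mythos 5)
Your proposal is correct in substance, but it takes a genuinely different route from the paper, which never computes $H^*$ at all. The paper first reduces approximation in $X\times Y$ to approximation of boundary data --- the impedance traces $\pa_\nu\mu_j+{\rm i}\mu_j\in H^{-3/2}(\pa B_j)$ and the Dirichlet trace on $\pa D$ --- invoking well-posedness of the interior impedance problem in $B_j$ and of the Dirichlet problem in $D$; it then dualizes on those boundary spaces, so its annihilator is a triple of surface densities $(\phi_1,\phi_2,\phi)\in H^{3/2}(\pa B_1)\times H^{3/2}(\pa B_2)\times H^{-1/2}(\pa D)$ and the associated function $\rho$ is a combination of double- plus ${\rm i}\,$single-layer potentials on $\pa B_j$ and a single layer on $\pa D$. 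After the same unique-continuation chain you use, the jump relations convert $\rho\equiv 0$ outside into homogeneous impedance problems in $B_j$ and a homogeneous Dirichlet problem in $D$, whose uniqueness kills all three densities. You instead dualize directly in $X\times Y$, using the (correct and useful) observation that the $L^2_\Delta$ inner product collapses to $(1+\kappa_j^4)(\cdot,\cdot)_{L^2(B_j)}$ on $X_j$; your annihilator function $w$ is then a volume potential over $B_1\cup B_2$ plus a single layer on $\pa D$, the obstacle density is killed exactly as in the paper, and the volume densities $v_j$ are killed by a Green's-identity argument --- legitimate because Newtonian potentials of compactly supported $L^2$ densities lie in $H^2_{\rm loc}$, so $w$ has vanishing Cauchy data on $\pa B_j$ from inside, and pairing $w$ against $v_j$ (not $\ov{v_j}$) indeed yields $c_j\int_{B_j}|v_j|^2\,{\rm d}x=0$. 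What each approach buys: yours avoids the paper's somewhat delicate reduction step (stability of the impedance problem with $H^{-3/2}$ data controlling the $L^2(B_j)$ norm of the solution) and replaces interior-impedance uniqueness by an elementary Green's identity; the paper's keeps every duality in an $L^2$-type boundary pairing, where layer-potential jump relations apply verbatim, and thereby never has to confront the Hilbert-space structure of $X\times Y$.

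Two small points of bookkeeping in your write-up. First, with your definition of $w$ (which contains $\ov{v_1}$, $\ov{v_2}$, $\ov{\phi}$), the correct identities are $\langle Hg,(v,\phi)\rangle_{X\times Y}=\int_{\G_{b,a}}g\,w\,{\rm d}s$ and $H^*(v,\phi)=\ov{w}$; the pair of formulas you state is internally inconsistent by one conjugation, though this is harmless since $w=0$ iff $\ov{w}=0$, and you correctly run the radiation and unique-continuation argument on the radiating function $w$ rather than on the incoming $\ov{w}$. Second, the $Y$-component: since $Y$ carries the $H^{1/2}(\pa D)$ inner product, the $\pa D$ term in $w$ is a single layer with density $\ov{\Lambda\phi}$ for a Riesz-type isomorphism $\Lambda: H^{1/2}(\pa D)\to H^{-1/2}(\pa D)$, not with density $\ov{\phi}$ itself. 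You flag exactly this, and it is indeed only a cosmetic issue: $\Lambda\phi=0$ iff $\phi=0$, and the jump relations you need hold for $H^{-1/2}$ densities, so the argument closes.
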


\begin{proof}
For any fixed  $(\mu,\xi)\in X\times Y$ with $\mu:=\chi_1\mu_1+\chi_2\mu_2$ for $\mu_{1}\in X_1$ and 
$\mu_2\in X_2$, it is sufficient to show that $\forall \vep>0$, there exists
$g\in L^2(\G_{b,a})$ such that $\|Hg-(\mu,\xi)\|_{X\times Y}<\vep$. That is,
\ben\label{b12}
||\eta p-\mu_{1}||_{L^2(B_1)}+||-\eta p-\mu_2||_{L^2(B_2)}+||-p-\xi||_{H^{1/2}(\pa D)}<\varepsilon,
\enn
where
\ben\label{b13}
p(x):=\int_{\G_{b,a}}G_0(x,y)g(y){\rm d}s(y)\qquad {\rm for}\; x\in\R^2.
\enn
Since $\mu_{1}\in X_1$ and $\mu_2\in X_2$, it follows from the trace theorem that
\ben
\frac{\partial\mu_1}{\partial\nu}+{\rm i}\mu_1\in H^{-3/2}(\partial B_1),\quad\quad 
\frac{\partial\mu_2}{\partial\nu}+{\rm i}\mu_2\in H^{-3/2}(\partial B_2).
\enn
Therefore, it suffices for us to show that
\ben
\{((\pa_{\nu} p+{\rm i}p)|_{\pa B_1},(\pa_{\nu} p+{\rm i}p)|_{\pa B_2}, p|_{\pa D}): g\in L^2(\G_{b,a})\}
\enn
is dense in $H^{-3/2}(\pa B_{1})\times H^{-3/2}(\pa B_{2})\times H^{1/2}(\pa D)$, based on the well-posedness 
of the impedance problem in $B_j$ and the  Dirichlet problem in $D$.
To this end, let $(\phi_1,\phi_2,\phi)\in H^{3/2}(\pa B_1)\times H^{3/2}(\pa B_2)\times H^{-1/2}(\pa D)$ be 
chosen such that
\ben\label{b14}
\sum_{j=1}^{2}\int_{\pa B_j}\left(\frac{\pa p}{\pa\nu}(x)+{\rm i}p(x)\right)\ov{\phi}_j(x){\rm d}s(x)
+\int_{\pa D}p(x)\ov{\phi}(x){\rm d}s(x)=0
\enn
for all $g\in L^2(\G_{b,a})$.

By interchanging the order of integration, it is concluded that
\ben\label{b15}
\int_{\G_{b,a}}g(y)\rho(y){\rm d}s(y)=0 \qquad {\rm for\;}g\in L^2(\G_{b,a}),
\enn
with $\rho$ given by
\ben\label{b16}
\rho(y)=\sum_{j=1}^{2}\int_{\pa B_j}\left(\frac{\pa G_0(x,y)}{\pa\nu(x)}
+{\rm i}G_0(x,y)\right)\ov{\phi_j}(x){\rm d}s(x)+\int_{\pa D}G_0(x,y)\ov{\phi}(x){\rm d}s(x).
\enn
By (\ref{b15}) it is deduced that $\rho=0$ on $\G_{b,a}$. Then, and since $\rho$ is analytic on $\G_b$,
it follows by analytic continuation and the uniqueness result of the Dirichlet problem in $U_b$
that $\rho=0$ in $\R^2\se\{\ov{D}\cup\ov{B_1}\cup\ov{B_{2}}\}$.
Use the jump relations of the layer potentials to obtain that $[\rho]=0$ on $\pa D$ and
\ben
[\rho]=\ov{\phi_j},\quad [\pa_\nu\rho]=-{\rm i}\ov{\phi_j}\qquad {\rm on\;} \partial B_j
\enn
for $j=1,2$, where $[\cdot]$ indicates the difference of the limits of the function approaching
the boundary from the exterior and interior domains of $B_j$ and $D$, respectively.
Note that $\rho$ is the unique solution of the impedance problem
\ben\label{b18}
\Delta\rho+\kappa^2_1\rho=0\quad{\rm in}\; B_1,\qquad \pa_\nu\rho+{\rm i}\rho=0\quad{\rm on}\;\pa B_1
\enn
and the Dirichlet problem
\ben\label{b19}
\Delta\rho+\kappa_2^2\rho=0\quad {\rm in}\; D,\qquad \rho=0\quad{\rm on}\;\;\pa D.
\enn
Thus, $\rho=0$ in $B_1\cup D$ since $\kappa^2_2$ is not a Dirichlet eigenvalue for $-\Delta$ in $D$,
leading to the fact that $\phi_1=0$ and $\phi=0$.
Similarly, we have $\phi_2=0$. The proof is thus complete.
\end{proof}

With the above analysis, we are ready to present the sampling method for simultaneously
reconstructing the shape and location of the rough interface and the buried obstacle by equation (\ref{b3}).

\begin{theorem}\label{theorem3}
If $\kappa_1^2$ is neither a transmission eigenvalue of $B_1$ nor that of $B_2$ and $\kappa_2^2$ is not
an Dirichlet eigenvalue of $-\Delta$ in $D$, then the following statements hold. 

$(1)$ For $z\in B_1\cup B_2\cup D$ and $\vep>0$, there exists $g_{z,\vep}\in L^2(\G_{b,a})$
satisfying the inequality
\be\label{b30}
\|Ng_{z,\vep}(x)-G_0(x,z)\|_{L^2(\G_{b,a})}<\vep
\en
such that $||g_{z,\vep}||_{L^2(\G_{b,a})}\rightarrow\infty$ and $||Hg_{z,\vep}||_{X\times Y}\rightarrow\infty$
as $z$ approaches $(\G\se\G_0)\cup\pa D$.

$(2)$ For $z\notin B_1\cup B_2\cup D$ and $\vep>0$ and $\delta>0$, there exists a
$g_{z,\vep}^{\delta}\in L^2(\G_{b,a})$ satisfying the inequality
\be\label{b31}
\|Ng^{\delta}_{z,\vep}(x)-G_0(x,z)\|_{L^2(\G_{b,a})}<\vep+\delta,
\en
such that $||g^{\delta}_{z,\vep}||_{L^2(\G_{b,a})}\rightarrow\infty$ and
$||Hg_{z,\vep}||_{X\times Y}\rightarrow\infty$ as $\delta\rightarrow 0$.
\end{theorem}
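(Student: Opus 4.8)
**The plan is to prove each part by combining the range characterization of Lemma \ref{lemma2}, the density results of Lemma \ref{lemma4} and Theorem \ref{theorem2}, the blow-up behavior from Lemma \ref{lemma3}, and the factorization $N=LH$.**

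For part $(1)$, suppose $z\in B_1\cup B_2\cup D$. By Lemma \ref{lemma2}, $G_0(\cdot,z)|_{\G_{b,a}}\in{\rm Range}(L)$, so there exists $(h_1,h_2)\in X\times Y$ with $L(h_1,h_2)=G_0(\cdot,z)|_{\G_{b,a}}$ exactly. Since $N=LH$ and ${\rm Range}(H)$ is dense in $X\times Y$ (Theorem \ref{theorem2}), I can choose $g_{z,\vep}\in L^2(\G_{b,a})$ so that $Hg_{z,\vep}$ is close to $(h_1,h_2)$ in the $X\times Y$-norm; the boundedness of $L$ (from Theorem \ref{theorem1}) then transfers this into the estimate $\|LHg_{z,\vep}-L(h_1,h_2)\|_{L^2(\G_{b,a})}<\vep$, which is exactly (\ref{b30}). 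The blow-up assertion is the delicate part. As $z$ approaches $(\G\se\G_0)\cup\pa D$, I must show $\|Hg_{z,\vep}\|_{X\times Y}\to\infty$ and $\|g_{z,\vep}\|_{L^2}\to\infty$. For $z\to\pa B_j\cap\G$ this follows from Lemma \ref{lemma3}: the solution $(v_j,w_j)$ of the ITP (\ref{a26}) constructed in Lemma \ref{lemma2} satisfies $h_1=\eta(\chi_1v_1+\chi_2w_2)$, and since $\|v_{j,z_n}\|_{L^2(B_j)}$ and $\|w_{j,z_n}\|_{L^2(B_j)}$ blow up, so does $\|h_1\|_X$; because $Hg_{z,\vep}$ approximates $(h_1,h_2)$ to within $\vep$, it must also blow up, and then $\|g_{z,\vep}\|_{L^2}\to\infty$ by the boundedness of $H$. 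For $z\to\pa D$ the blow-up of $h_2=G_0(\cdot,z)|_{\pa D}$ in the $Y$-norm plays the analogous role.

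For part $(2)$, suppose $z\notin B_1\cup B_2\cup D$. By Lemma \ref{lemma2}, now $G_0(\cdot,z)|_{\G_{b,a}}\notin{\rm Range}(L)$. However, Lemma \ref{lemma4} guarantees ${\rm Range}(N)$ is dense in $L^2(\G_{b,a})$, so for each $\delta>0$ there is some $\phi_\delta\in{\rm Range}(L)$ (more precisely in the closure) with $\|G_0(\cdot,z)-\phi_\delta\|_{L^2(\G_{b,a})}<\delta$; writing $\phi_\delta=L(h_1^\delta,h_2^\delta)$ and then using Theorem \ref{theorem2} to approximate $(h_1^\delta,h_2^\delta)$ by $Hg^\delta_{z,\vep}$ to within $\vep$ in the $L^2(\G_{b,a})$-image yields (\ref{b31}) via the triangle inequality. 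The key mechanism forcing blow-up here is that since $G_0(\cdot,z)|_{\G_{b,a}}$ lies outside ${\rm Range}(L)$ but is being approximated arbitrarily well, the norms of the approximating densities must diverge as $\delta\to0$; this is a standard feature of ill-posed first-kind equations and follows because if $\|g^\delta_{z,\vep}\|_{L^2}$ stayed bounded, a weak-limit argument combined with the boundedness and closedness of $L$ would place $G_0(\cdot,z)|_{\G_{b,a}}$ in ${\rm Range}(L)$, a contradiction.

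The main obstacle I anticipate is making the blow-up arguments fully rigorous rather than merely plausible. In part $(1)$, the subtlety is that $g_{z,\vep}$ is only an approximate preimage, so I must carefully propagate the genuine divergence of $\|(h_1,h_2)\|_{X\times Y}$ (coming from Lemma \ref{lemma3}) through the two layers of approximation (the $H$-density step and the $L$-boundedness step) while keeping $\vep$ fixed and letting $z$ move, ensuring the $\vep$-sized perturbation cannot cancel the blow-up. In part $(2)$, the crux is the contradiction argument showing that bounded densities would force membership in the range: this requires the weak compactness of bounded sets in the Hilbert space $L^2(\G_{b,a})$ together with the fact that $L\circ H$ has dense range but $G_0(\cdot,z)|_{\G_{b,a}}$ is genuinely outside ${\rm Range}(L)$, and care is needed because density of the range does not by itself preclude the limit from lying in the range unless one exploits the specific structure established in Lemma \ref{lemma2}.
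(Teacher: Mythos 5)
Your part $(1)$ is correct and follows the paper's own argument essentially verbatim: exact solvability $L(h_1,h_2)=G_0(\cdot,z)|_{\G_{b,a}}$ from Lemma \ref{lemma2}, approximation of $(h_1,h_2)$ by $Hg_{z,\vep}$ via Theorem \ref{theorem2}, the estimate (\ref{b30}) from the boundedness of $L$, and the blow-up transferred from Lemma \ref{lemma3} (for $z\to\G\se\G_0$) and from $\|G_0(\cdot,z)\|_{H^{1/2}(\pa D)}\to\infty$ (for $z\to\pa D$) through the $\vep$-approximation and the boundedness of $H$.

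Part $(2)$ contains a genuine gap in the blow-up argument. You claim that if $\|g^{\delta}_{z,\vep}\|_{L^2(\G_{b,a})}$ stayed bounded as $\delta\to 0$, a weak-limit argument would place $G_0(\cdot,z)|_{\G_{b,a}}$ in ${\rm Range}(L)$. This fails because the residual in (\ref{b31}) is $\vep+\delta$ with $\vep$ \emph{fixed}: extracting a weak limit $g$ of a bounded subsequence and using compactness of $N$ (not ``closedness of $L$'', which is neither the relevant property nor sufficient here) only yields $\|Ng-G_0(\cdot,z)\|_{L^2(\G_{b,a})}\le\vep$, i.e.\ that $G_0(\cdot,z)$ lies within $\vep$ of ${\rm Range}(N)$ --- which is no contradiction at all, since ${\rm Range}(N)$ is dense in $L^2(\G_{b,a})$ by Lemma \ref{lemma4}. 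Indeed, the statement you are trying to prove (that \emph{every} bounded family satisfying (\ref{b31}) is impossible) is false: by denseness there is a single $g$ of finite norm with $\|Ng-G_0(\cdot,z)\|<\vep$, and this $g$ satisfies (\ref{b31}) for every $\delta>0$ without blowing up. The blow-up can therefore only be established for a \emph{specifically constructed} family, by forcing divergence at the level of $X\times Y$ before passing to $g^{\delta}_{z,\vep}$. This is exactly what the paper does: it applies Tikhonov regularization to $Lh=G_0(\cdot,z)|_{\G_{b,a}}$, uses the singular system of the compact injective operator $L$ together with Picard's theorem (and the denseness of ${\rm Range}(L)$, plus $G_0(\cdot,z)\notin{\rm Range}(L)$) to get $\|h_\alpha\|_{X\times Y}\to\infty$ as $\alpha\to 0$, selects $\alpha=\alpha(\delta)$ by the Morozov discrepancy principle, and only then approximates $h_\alpha$ by $Hg^{\delta}_{z,\vep}$ in the $X\times Y$-norm via Theorem \ref{theorem2}; then $\|Hg^{\delta}_{z,\vep}\|_{X\times Y}\ge\|h_\alpha\|_{X\times Y}-C\vep\to\infty$ and $\|g^{\delta}_{z,\vep}\|_{L^2(\G_{b,a})}\to\infty$ follows from the boundedness of $H$. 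Your construction via exact preimages $(h_1^\delta,h_2^\delta)$ of range elements $\phi_\delta$ could be repaired in the same spirit --- apply the weak-limit/compactness argument to $(h_1^\delta,h_2^\delta)$ in $X\times Y$, where the residual $\|L(h_1^\delta,h_2^\delta)-G_0(\cdot,z)\|<\delta$ \emph{does} vanish, to conclude $\|(h_1^\delta,h_2^\delta)\|_{X\times Y}\to\infty$, and then transfer --- but for that transfer you must approximate $(h_1^\delta,h_2^\delta)$ by $Hg^{\delta}_{z,\vep}$ in the $X\times Y$-norm, not merely ``in the $L^2(\G_{b,a})$-image'' as you wrote, since closeness of the images under $L$ gives no control relating $\|Hg^{\delta}_{z,\vep}\|_{X\times Y}$ to $\|(h_1^\delta,h_2^\delta)\|_{X\times Y}$.
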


\begin{proof}
We first assume that $z\in B_1\cup B_2\cup D$.  Since $(v_j, w_j) (j=1,2)$ are the solution
to the ITP (\ref{a26}), we define $h_1:=\eta(\chi_1v_1+\chi_2w_2)$ and $h_2:=G_0(\cdot,z)|_{\pa D}$.
Then it holds $(h_1,h_2)\in X\times Y$ and by Lemma \ref{lemma2},
\be\label{b20}
L(h_1,h_2)=G_0(\cdot,z)|_{\G_{b,a}}.
\en
Under the assumption on $\kappa^2_2$, it follows from Theorem \ref{theorem2} that ${\rm Range}(H)$
is dense in $X\times Y$. So, for any $\vep>0$ there exists a function $g_{z,\vep}\in L^2(\G_{b,a})$
such that
\be\label{b21}
\|Hg_{z,\vep}-(h_1,h_2)\|_{X\times Y}<\vep
\en
which gives
\ben\label{b22}
\|Ng_{z,\vep}(x)-G_0(x,z)\|_{L^2(\G_{b,a})}\lesssim\vep
\enn
by combining (\ref{b20})-(\ref{b21}) and the boundedness of $L$.

Next, it remains to show $\|g_{z,\vep}\|_{L^2(\G_{b,a})}\rightarrow\infty$ and
$||Hg_{z,\vep}||_{X\times Y}\rightarrow \infty$ as $z$ approaches $(\G\se\G_0)\cup\pa D$.
Assume on the contrary that there exists a fixed constant $C>0$ such that
$\|g_{z,\vep}\|_{L^2(\G_{b,a})}\leq C$. Then, and from the boundedness of $H$ it follows that
$\|Hg_{z,\vep}\|_{X\times Y}\lesssim C$. We thus have
\ben
\|h_1\|_{X}+\|h_2\|_{Y}\lesssim C+\varepsilon
\enn
by (\ref{b21}), which means that
\begin{equation}\label{b23}
\|v_1\|_{L^2(B_1)}+\|w_2\|_{L^2(B_2)}+\|G_0(\cdot,z)\|_{H^{1/2}(\pa D)}\lesssim C+\varepsilon
\end{equation}
from the definition of $h_1$ and $h_2$.
For the case $z\to \G\se\G_0$, it follows from Lemma \ref{lemma3} that $\|v_1\|_{L^2(B_1)}+\|w_2\|_{L^2(B_2)}\to+\infty$ which contradicts with inequality (\ref{b23}). For the case $z\to \pa D$, it is easily checked that $\|G_0(\cdot,z)\|_{H^{1/2}(\pa D)}\rightarrow \infty$ which also contradicts with inequality (\ref{b23}). Hence, it is deduced that $\|g_{z,\varepsilon}\|_{L^2(\G_{b,a})}\rightarrow \infty$ and $\|Hg_{z,\varepsilon}\|_{X\times Y}\rightarrow \infty$ as $z\to(\G\se\G_0)\cup\pa D$.

Next we consider the case $z\notin B_1\cup B_2\cup D$. By Lemma \ref{lemma2}, it is known that $G_0(\cdot,z)|_{\G_{b,a}}$ is not in the range of $L$. Thus, it is not solvable for the first kind of operator equation $Lh=G_0(\cdot,z)|_{\G_{b,a}}$. However, by Lemma \ref{lemma1} we have that $L:X\times Y\to L^2(\G_{b,a})$ is compact and injective. Using \cite[Theorem 4.13 ]{CK13}, it can be shown that the regularized equation
\begin{eqnarray*}\label{b24}
\alpha h_{\alpha}+L^*Lh_{\alpha}=L^*(G_0(\cdot,z)|_{\G_{b,a}})
\end{eqnarray*}
always has a unique solution $h_{\alpha}=(h_{1\alpha}, h_{2\alpha})\in X\times Y$ for each regularized parameter $\alpha>0$, which can be represented as
\begin{eqnarray*}\label{b25}
h_{\alpha}=\sum_{n=1}^{\infty}\frac{\lambda_n}{\alpha+\lambda_n^2}(G_0(\cdot,z),\psi_n)\varphi_n.
\end{eqnarray*}
Here, $(\lambda_n,\varphi_n,\psi_n)$ denotes a singular system of the operator $L$. Since ${\rm Range}(L)$ is dense on $L^2(\G_{b,a})$, we apply the Picard theorem (cf. \cite{CK13}) to deduce
\begin{eqnarray}\label{b26}
\lim_{\alpha\to 0}\|h_{\alpha}\|_{X\times Y}=\infty.
\end{eqnarray}
The standard discussion now shows that
for $0<\delta<\|G_0(\cdot,z)\|_{L^2(\G_{b,a})}$, there exists a unique parameter $\alpha$ satisfying the equation
\begin{eqnarray}\label{b27}
\|Lh_{\alpha}-G_0(\cdot,z)\|_{L^2(\G_{b,a})}=\delta.
\end{eqnarray}
Recalling $h_{\alpha}\in X\times Y$, we conclude by the denseness of ${\rm Range}(H)$ that there exists $g_{z,\varepsilon}^{\delta}\in L^2(\G_{b,a})$ such that
\begin{eqnarray}\label{b28}
\|LHg_{z,\varepsilon}^{\delta}-Lh_{\alpha}\|<\varepsilon
\end{eqnarray}
for any given  $\varepsilon>0$.

Finally, by combining (\ref{b27}) and (\ref{b28}), we arrive at
\ben\no
\|Ng_{z,\varepsilon}^{\delta}-G_0(\cdot,z)\|_{L^2(\G_{b,a})}
&=&\|LHg_{z,\varepsilon}^{\delta}-G_0(\cdot,z)\|_{L^2(\G_{b,a})}\\\no
&\leq&\|LHg_{z,\varepsilon}^{\delta}-Lh_{\alpha}\|_{L^2(\G_{b,a})}+\|Lh_{\alpha}-G_0(\cdot,z)\|_{L^2(\G_{b,a})}\\\label{b29}
&<&\varepsilon+\delta.
\enn
Noticing $\alpha\to 0$ as $\delta\to 0$, it can be checked by (\ref{b26}) and (\ref{b28}) that $\|g^{\delta}_{z,\varepsilon}\|_{L^2(\G_{b,a})}$ $\to\infty$ and $\|Hg_{z,\varepsilon}^{\delta}\|_{X\times Y}\rightarrow\infty$ as $\delta\rightarrow 0$.
The proof is thus complete.
\end{proof}

By Theorem \ref{theorem3}, it is found that the solution $g_z$ of equation (\ref{b3}) in the sense of inequalities (\ref{b30}) and (\ref{b31}) has totally different behaviors when the sampling point $z$ lies inside or outside of the domain $B_1\cup B_2\cup D$, which provides a qualitative way to visualize the local perturbation $B_1\cup B_2$ and the embedded obstacle $D$.  Based on this observation, we define the indicator function
\ben
{\rm Ind}(z):=1/\|g_z\|_{L^2(\G_{b,a})}
\enn
where $g_z$ is the solution of equation (\ref{b30}) and (\ref{b31}). It follows from the Theorem \ref{theorem3} 
again that ${\rm Ind}(z)$ is small when the sampling point $z$ approaches the local perturbation $\G\se\G_0$ 
or $z$ approaches $\partial D$ from inside of $B_1\cup B_2\cup D$. Therefore, ${\rm Ind}(z)$ can provide 
a fast imaging algorithm. The following procedure shows how to numerically reconstruct the shape and location 
of $B_1$, $B_2$ and $D$ by ${\rm Ind}(z)$.\\

\begin{algorithm}\caption{Reconstruction of locally rough interfaces and buried obstacles by the LSM}\label{alg1}

\begin{itemize}

\item Select a rectangular grid $S$ containing the local perturbation of the scattering interface $\G$ and the buried obstacle $D$;

\item Solve the scattering problem (\ref{a4})-(\ref{a8}) and (\ref{a9}) to obtain the wave-field data $u(x,y)$ and $G_0(x,y)$ for $x,y\in \G_{b,a}$ by the Nystr\"{o}m method. Then, solve Problem (\ref{a9}) again to obtain the data $G_0(x,z)$ for each sampling point $z\in S$;

\item For each $z\in S$, solve the near-field equation (\ref{b3}) to obtain an approximate solution $g_z$, based on the Tikhonnov regularization with the Morozov discrepancy principle; 

\item Choose a cut-off value $C>0$ and compute the indicator function ${\rm Ind}(z)$ so that it is in practice reasonable to detect $z\in B_1\cup B_2\cup D$ if and only if ${\rm Ind}(z)\leq C$.

\end{itemize}
\end{algorithm}

\section{Numerical results}\label{sec5}
\setcounter{equation}{0}

Following Algorithm \ref{alg1}, some numerical examples are carried out to demonstrate the performance of the 
sampling method in Theorem \ref{theorem3}, based on numerically solving the following equation
\be\label{d1}
Ng(x)=G_0(x,z)\qquad {\rm for}\; x\in\G_{b,a}.
\en
Recall that the kernel $u(x,y)-G_0(x,y)=u^s(x,y)-G_0^s(x,y)$ is analytic, leading to that equation (\ref{d1}) 
is severely ill-posed. Therefore, equation (\ref{d1}) has to be solved by considering its regularized equation
\be\label{d2}
\alpha g_z^{\alpha}+N^*N g_z^{\alpha}=N^*(G_0(x,z))|_{\G_{b,a}}
\en
with the regularization parameter $\alpha$ chosen by the Morozov discrepancy principle.

In next numerical examples, the synthetic data $u^s(x,y)$, $G^s_0(x,y)$ and $G_0(x,z)$ are obtained by 
solving the scattering problems (\ref{a4})-(\ref{a8}) and (\ref{a9}) with the Nystr\"{o}m method 
(cf. \cite{LYZ13}). Then the near-field operator $N$ can be discretized into the following finite 
dimensional matrix
\ben
N_{n\times n}=(u^s(x_j, y_l)-G_0^s(x_j,y_l))_{1\leq j, l\leq n},
\enn
where $x_j$ is the measuring points equally distributed at $\G_{b,a}$ with $j=1,2,\cdots,n$, and $y_l$ is 
the incident point sources which is also equally distributed at $\G_{b,a}$ with $j=1,2,\cdots,n$. Moreover, 
the test function $G_0(x,z)$ is also discretized as a finite dimensional vector $(G_0(x_j,z))_{1\leq j\leq n}$ 
for each sampling point $z\in S$. Thus we have the following discretization regularized equation
\be\label{d4}
\alpha g_{z}^{\alpha}+N^*_{n\times n}N_{n\times n}g_{z}^{\alpha}=N^*_{n\times n}(G_0(x_j,z))_{1\leq j\leq n}
\en
for equation (\ref{d2}).
We can then define the indicator function
\ben
{\rm Ind}_n(z)= 1{\left/\left(\sum_{1\leq j\leq n}|g^\alpha_{z,j}|^2\right)^{\frac{1}{2}}\right.}.
\enn
in the discrete form for $g^\alpha_z:=(g^\alpha_{z,1},\cdots,g^\alpha_{z,n})^T\in\mathbb{C}^n$.

By Theorem \ref{theorem3}, it can be deduced that ${\rm Ind}_{n}(z)$ should be very small for 
$z\notin B_1\cup B_2\cup D$ and consideraly large for $z\in B_1\cup B_2\cup D$ if ${\rm Ind}_n(z)$
approximates ${\rm Ind}(z)$. Furthermore, in order to present the results under the same standard, 
we normalize ${\rm Ind}_{n}(z)$ to obtain a new indicator function
\ben
{\rm NInd}(z):={\rm Ind}_{n}(z)/\max \limits_{z\in S}{\rm Ind}_{n}(z)
\enn
which will be used in the following numerical examples to reconstruct the shape and location 
of $B_1$, $B_2$ and $D$.

To test the stability of the inversion algorithm, we also consider equation (\ref{d4}) with noisy data. 
In this case, the wave-field data $u^s(x,y)$ is given by
\ben
((u^s(x_j,y_l))_{n\times n})_{\delta}:=(u^s(x_j,y_l))_{n\times n}
+\delta\frac{\zeta}{\|\zeta\|_2}\|u^s(x_j,y_l)_{n\times n}\|_2,
\enn
for relative error $\delta>0$, where $\zeta_{n\times n}=(\zeta_1)_{n\times n}+{\rm i}(\zeta_2)_{n\times n}$ 
is a complex-valued matrix with its real part $\zeta_1$ and imaginary part $\zeta_2$ consisting of 
random numbers obeying standard normal distribution $N(0,1)$. Then Algorithm \ref{alg1} could be reduced to the following form.

\begin{algorithm}\caption{Reconstruction of locally rough interfaces and buried obstacles by the LSM}\label{alg2}
\begin{itemize}

\item Select a rectangular grid $S$ containing the local perturbations of the scattering interface $\G$ and the buried obstacle $D$;

\item Solve Problem (\ref{a4})-(\ref{a8}) and (\ref{a9}) to obtain the synthetic data $(u^s(x_j,y_l))_{n\times n}$, $(G_0^s(x_j,y_l))_{n\times n}$ and $(G_0(x_j,z))_{n\times 1}$ for each $z\in S$ by the Nystr\"{o}m method;

\item Solve the discretization regularized equation (\ref{d4}) for each $z\in S$ to obtain its solution $g^\alpha_z$ with different noisy data level; 

\item Compute the indicator function 
${\rm NInd}(z):={\rm Ind}_{n}(z)/\max \limits_{z\in S}{\rm Ind}_{n}(z),$
and then plot the mapping ${\rm NInd}(z)$ against $z$.

\end{itemize}
\end{algorithm}

Unless otherwise stated, we set the wavenumber $\kappa_1=1$ and $\kappa_2=2$, and consider the sampling points $z$ in the rectangular grid $(-5, 5)\times (-8.5,1.5)$ 
with the step size $0.06$ in $x$-axis and $0.06$ in $y$-axis. The measurement width  and height are chosen to 
be $a=20$ and $b=1.55$, respectively, for $\G_{b,a}$, and the number of measurement points is chosen to 
be $n=401$ which are uniformly distributed on $\G_{b,a}$.
Moreover, lots of numerical examples we have carried out show that $\alpha(z)$ can be taken to be a fixed constant. 
Here, we choose $\alpha(z)=10^{-6}$ in Algorithm 2 for the case of noisy data.

{\bf Example 1.} In this example, numerical results are presented for two particular cases. 
The first one ((a) in Figure \ref{f2}) is related to a planar surface $\G:=\G_0$ and the buried obstacle $D$ described by a circle
\ben
x(\theta) = (0.4\cos(\theta),-3+0.4\sin(\theta)),\quad \theta\in [0, 2\pi).
\enn
The second one (see $(b)$ in Figure \ref{f2}) is just related to a locally rough surface $\G$, described 
by $f(t)=-2\Omega_3(t)$, without buried obstacle in the lower half-space,  where $\Omega_3(\cdot)$ is a 
cubic $B$-spline function which is twice continuously differentiable with compactly support in $\R$ and is 
given by
\ben
\Omega_3(t)
= \left\{\begin{aligned}
\frac{1}{2}|t|^3-t^2+\frac{2}{3}     &\qquad\qquad \textrm{for}\; |t|\leq1,\\
-\frac{1}{6}|t|^3+t^2-2|t|+\frac{4}{3} & \qquad\qquad \textrm{for}\;1<|t|<2\,,\\
 0 &\qquad\qquad\textrm{for}\; |t|\geq2.
\end{aligned}
\right.
\enn
Figure \ref{f2} shows a satisfactory 
reconstruction for the two cases with different noisy level.

{\bf Example 2.} In this example, the rough interface $\G$ and the buried obstacle $D$ are described by 
(see (a) Figure \ref{f3})
\be\label{d5}
&&f(t)=2\Omega_3(2t+7)-2\Omega_3(2t)+2\Omega_3(2t-7),\;\; t\in \R,\\\label{d6}
&&x(\theta)=(-3+\rho(\theta)\cos(\theta),-7+\rho(\theta)\sin(\theta)),\quad \theta\in[0, 2\pi),
\en
where
\ben
\rho(\theta)=\frac{0.5+0.4\cos(\theta)+0.1\sin(2\theta)}{1+0.7\cos(\theta)}.
\enn
The numerical results are demonstrated with the exact data in $(b)$ of Figure \ref{f3}. It is readily seen that our sampling method can acquire a good reconstruction for the rough surface $\G$ and the buried obstacle $D$ especially for $\G$. To improve the performance for $D$, one can try to separately image $\G$ and $D$ 
which depends on some a priori information on $\G$ and $D$. In this case, we can choose two grids $S_1$ 
and $S_2$ with $(B_1\cup B_2)\subset S_1$, $D\subset S_2$ and $S_1\cap S_2=\emptyset$.
Then we implement Algorithm \ref{alg2} for $S_1$ and $S_2$, respectively, with $S_1=(-5,5)\times (-2,1.5)$, 
$S_2 = (-5,5)\times(-8.5,-2)$, which actually improve the reconstruction quality of $D$ and $\G$; see (c) and (d) 
of Figure \ref{f3}.

{\bf Example 3}. In this example, we aim to exam the dependence of our method on the relative position and 
distance between the local perturbation and the buried obstacle. The locally rough interface
$\G$ and the buried obstacle $D$ (rounded square) are described by
\be\label{d7}
&&f(t)=-1.5e^{-3(t-3)^2}\cdot f_0(t)\quad t\in \R,\\\label{d8}
&&x(\theta)=(-3+0.25(\cos^3(\theta)+\cos(\theta)),-3+0.25(\sin^3(\theta)+\sin(\theta))) \;\theta\in [0,2\pi)
\en
where $f_0(t)\in C_0^{\infty}(\R)$ is a cut-off function defined by
\ben
f_0(t)
= \left\{\begin{aligned}
1 &\qquad\qquad \textrm{for}\; |t|\leq4,\\
\;\left(1+e^{\frac{1}{5-|t|}+\frac{1}{4-|t|}}\right)^{-1} & \qquad\qquad \textrm{for}\;4<|t|<5\,,\\
 0 &\qquad\qquad\textrm{for}\; |t|\geq5;
\end{aligned}
\right.
\enn
see (a) in Figure \ref{f4}.

In this example, we demonstrate 
the reconstruction in (c) of Figure \ref{f4} for exact data and (e) for $2\%$ noisy data.
It is seen that the method can give satisfactory reconstructions for $\G$ and $D$, especially for 
the case of exact data. Next we fix $\G$ and then move the buried obstacle $D$ 
to the right by six units so that it lies below the perturbation of $\G$; see $(d)$ and $(f)$ of 
Figure \ref{f4} for the reconstructions, where
the perturbation of $\G$ and $D$ seem not to be distinguished very well in the case of 2\% noisy data. 
We guess that a possible reason is due to the strong multiple scattering between $\G$ and $D$ when the 
distance of $\G$ and $D$ is relatively close. 

{\bf Example 4.} Finally, we consider the rough surface $\G$ with multiple perturbations in three different cases. In the first case, the rough surface $\Gamma$ is described by the curve
\be\label{d9}
f(t)=\left[e^{-3(t+2)^2}+e^{-3(t-2)^2}\right]\cdot f_0(t),\quad t\in\R,
\en
which has two local perturbations. In the second case, the rough surface $\Gamma$ is given by
\be\label{d10}
f(t)=\left[e^{-8(t+4)^2}+e^{-8(t+2)^2}-1.5e^{-6(t-2)^2}+e^{-8(t-4)^2}\right]\cdot f_0(t),\quad t\in\R,
\en
which has four local perturbations. In the last case, the rough surface $\Gamma$ is described by
\be\no
f(t)&=&\Big[e^{-12(t+4)^2}+e^{-12(t+2.5)^2}-2e^{-12(t+1)^2}\\\label{d11}
&&+e^{-10(t-1)^2}+e^{-16(t-2.5)^2}+e^{-12(t-4)^2}\Big]\cdot f_0(t),\quad t\in\R,
\en
which has six local perturbations. And for these three cases, 
 the buried obstacle $D$ is described by an ellipse curve
\be\label{d12}
x(\theta)=(0.6\cos(\theta),-6+0.3\sin(\theta))\quad \theta\in [0,2\pi).
\en
We present the numerical results in Figure \ref{f5}. The first, second, and third row of Figure \ref{f5} is the physical configuration and the reconstruction from exact data for the first, second, and third case, repectively. The results in Figure \ref{f5} shows that our method remains to give a satisfactory reconstruction of multiple perturbations.

\begin{figure*}
\centering
\subfigure[Physical configuration]{\includegraphics[width=0.45\textwidth]{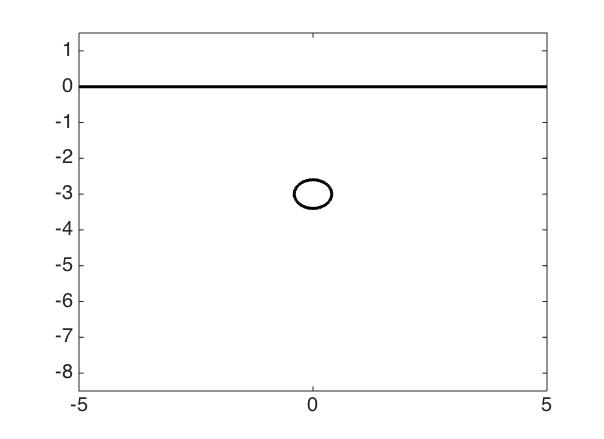}
}
\subfigure[Physical configuration]{\includegraphics[width=0.45\textwidth]{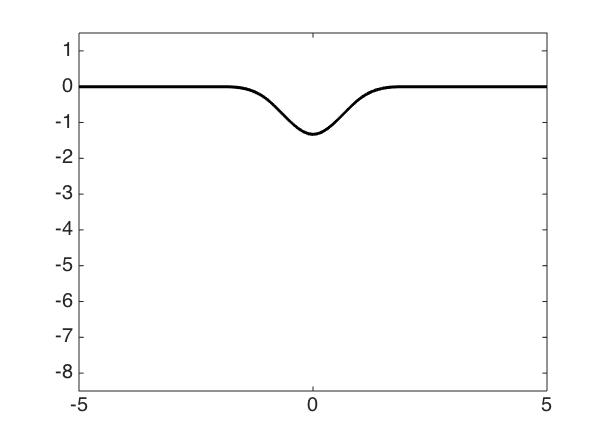}
}
\subfigure[No noise]{\includegraphics[width=0.45\textwidth]{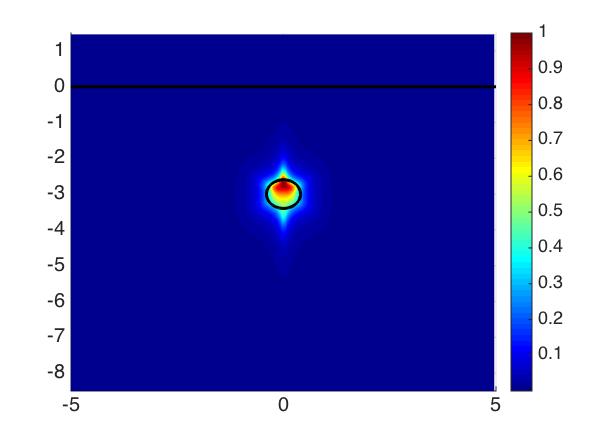}
}
\subfigure[No noise]{\includegraphics[width=0.45\textwidth]{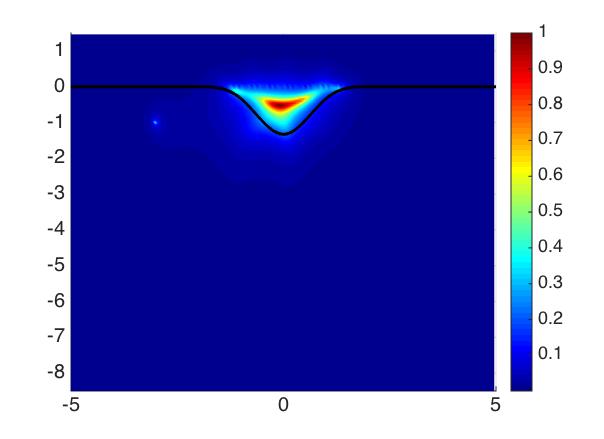}
}
\subfigure[2\% noise]{\includegraphics[width=0.45\textwidth]{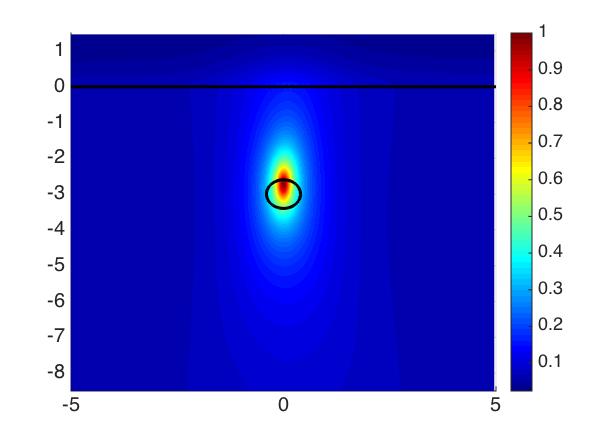}
}
\subfigure[2\% noise]{\includegraphics[width=0.45\textwidth]{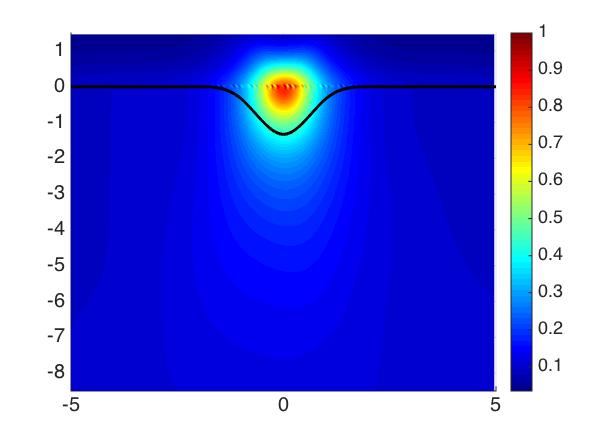}
}
\caption{The left is the reconstruction of a circle-shaped obstacle with a planar interface for the 
exact data and $2\%$ noise, and the right is the reconstruction of a local interface without buried 
obstacles for the exact data and $2\%$ noise. 
}\label{f2} 
\end{figure*}

\begin{figure*}
\centering
\subfigure[Physical configuration]{\includegraphics[width=0.45\textwidth]{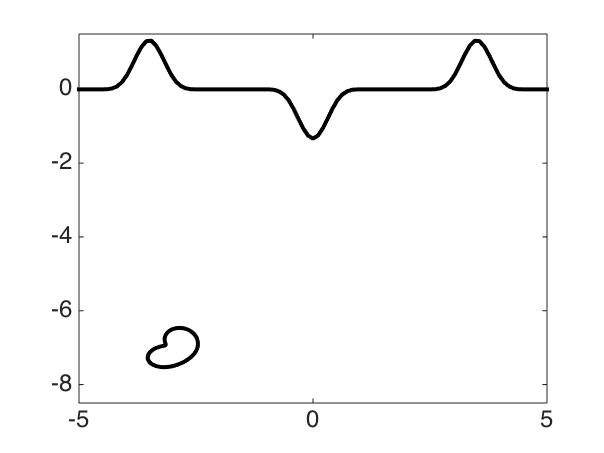}
}
\subfigure[No noise]{\includegraphics[width=0.45\textwidth]{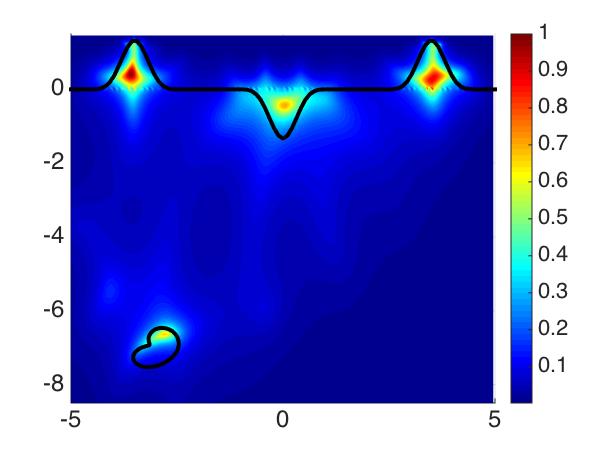}
}
\subfigure[No noise]{\includegraphics[width=0.45\textwidth]{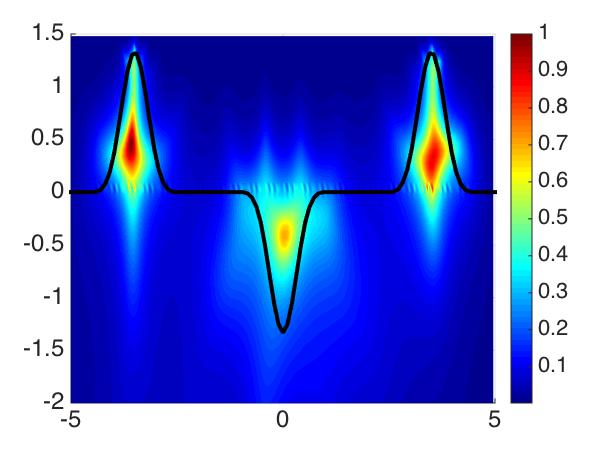}
}
\subfigure[No noise]{\includegraphics[width=0.45\textwidth]{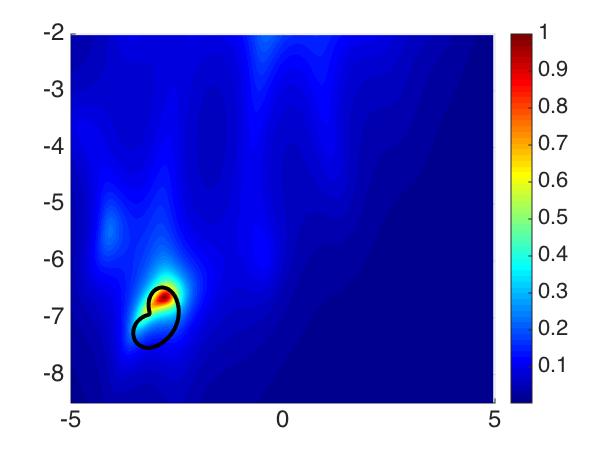}
}
\subfigure[2\% noise]{\includegraphics[width=0.45\textwidth]{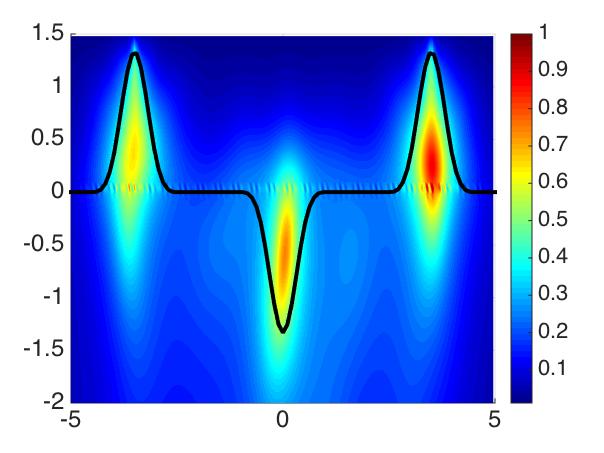}
}
\subfigure[2\% noise]{\includegraphics[width=0.45\textwidth]{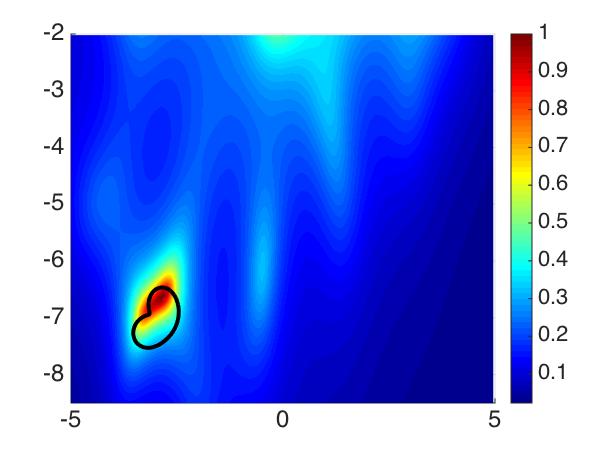}
}
\caption{The reconstructions of $\G$ given by (\ref{d5}) and an apple-shaped obstacle $D$ given by (\ref{d6}) 
. Picture (b) presents the reconstruction for $\G$ and $D$ with no noise data. Pictures (c) and (d) are separately sampling for $\G$ and $D$ with exact data, respectively. Pictures (e) and (f) are separately sampling for $\G$ and $D$ with 2\% noise, respectively. 
}\label{f3} 
\end{figure*}

\begin{figure*}
\centering
\subfigure[Physical configuration]{\includegraphics[width=0.45\textwidth]{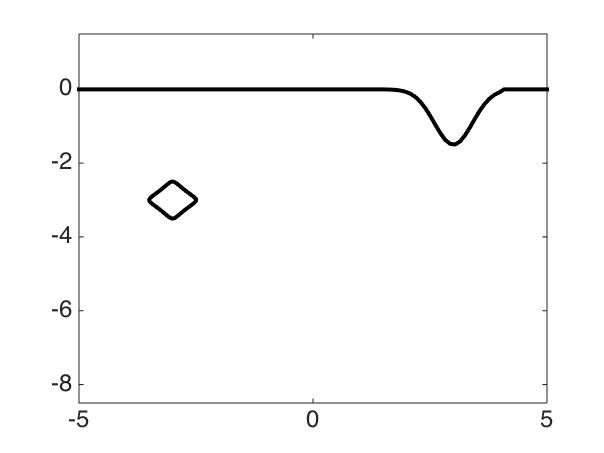}
}
\subfigure[Physical configuration]{\includegraphics[width=0.45\textwidth]{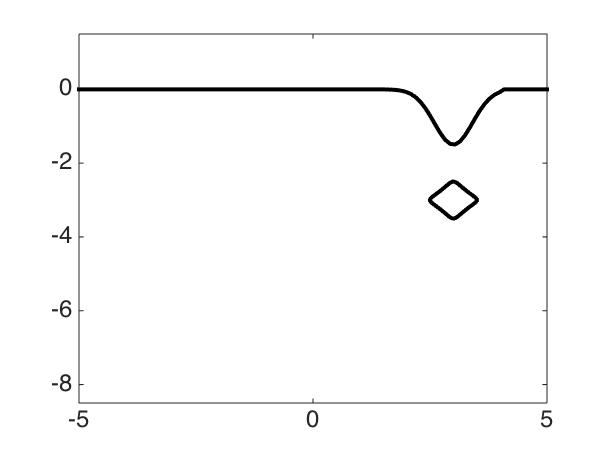}
}
\subfigure[No noise]{\includegraphics[width=0.45\textwidth]{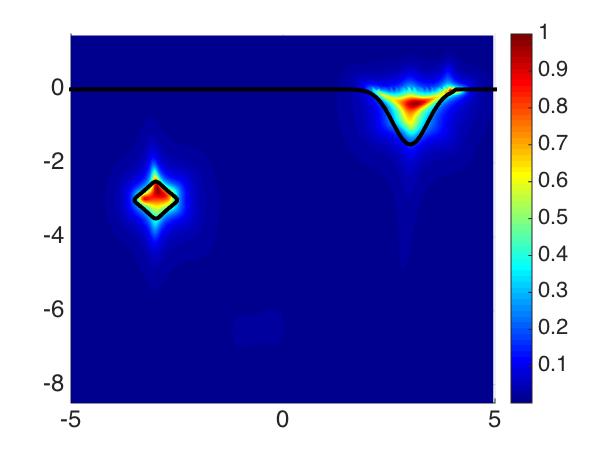}
}
\subfigure[No noise]{\includegraphics[width=0.45\textwidth]{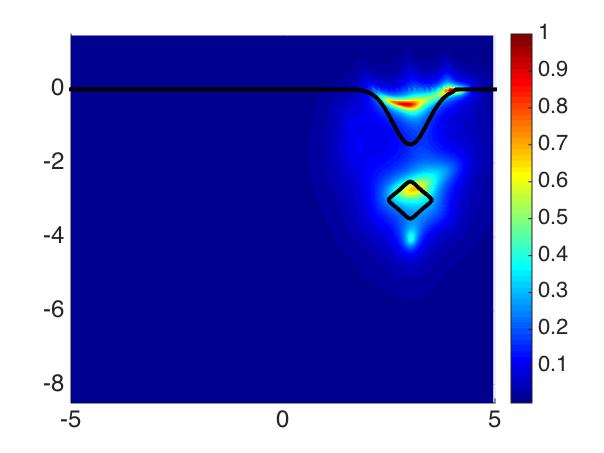}
}
\subfigure[2\% noise]{\includegraphics[width=0.45\textwidth]{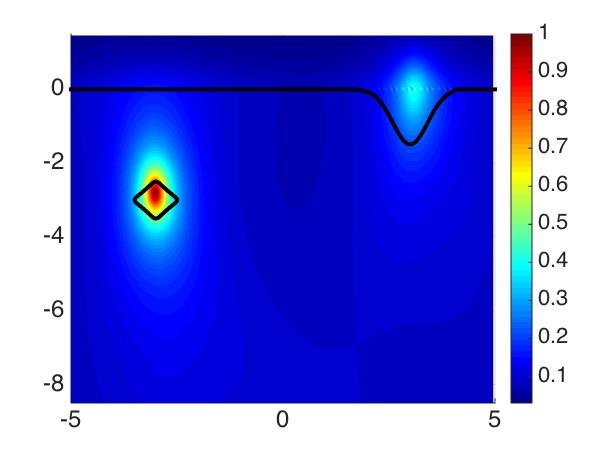}
}
\subfigure[2\% noise]{\includegraphics[width=0.45\textwidth]{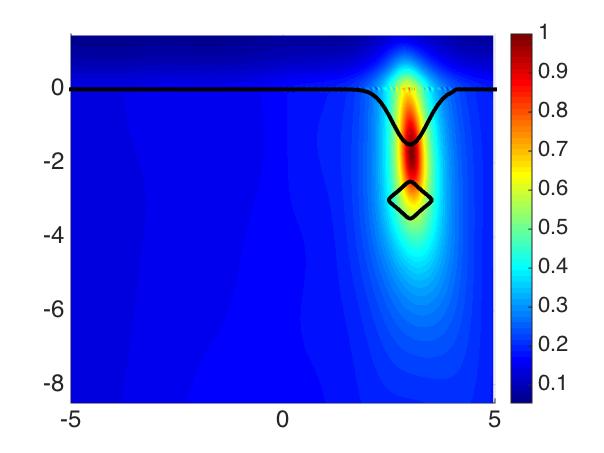}
}
\caption{The reconstructions of $\G$ lying below $\Gamma_0$ and a rounded square obstacle with exact and 2\% noisy data. Pictures (c) and (e) present the reconstructions for both $\G$ and $D$ given 
by (\ref{d7})-(\ref{d8}). Pictures (d) and (f) present the reconstruction for the case where $\G$ fixes 
but $D$ moves to the right by six units. }\label{f4}
\end{figure*}

\begin{figure*}
\centering
\subfigure[Physical configuration]{\includegraphics[width=0.45\textwidth]{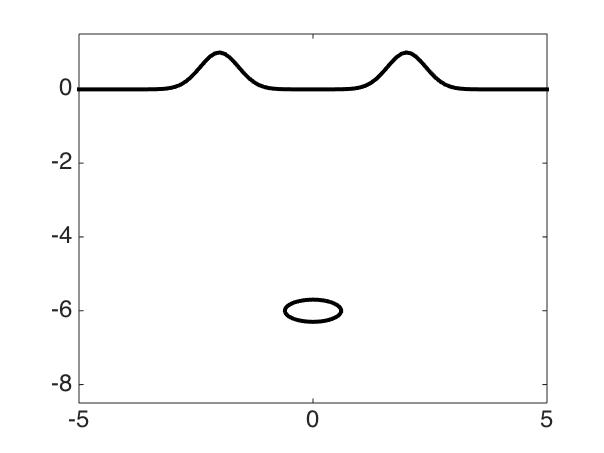}
}
\subfigure[No noise]{\includegraphics[width=0.45\textwidth]{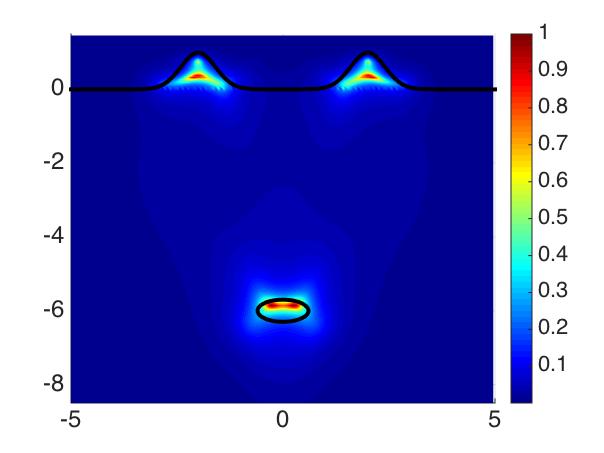}
}
\subfigure[Physical configuration]{\includegraphics[width=0.45\textwidth]{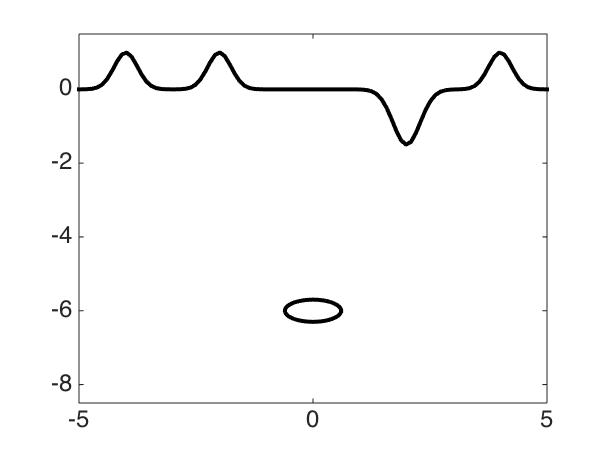}
}
\subfigure[No noise]{\includegraphics[width=0.45\textwidth]{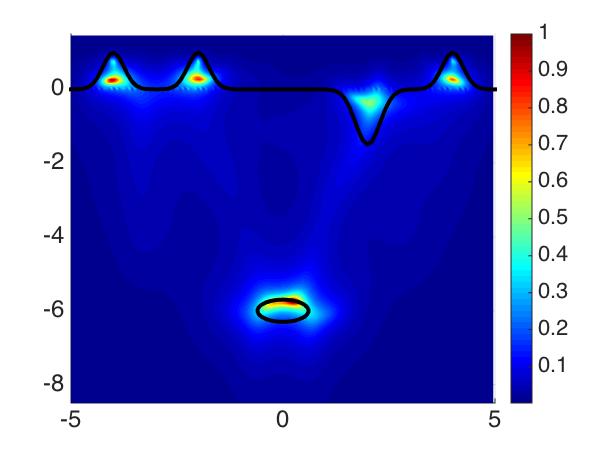}
}
\subfigure[Physical configuration]{\includegraphics[width=0.45\textwidth]{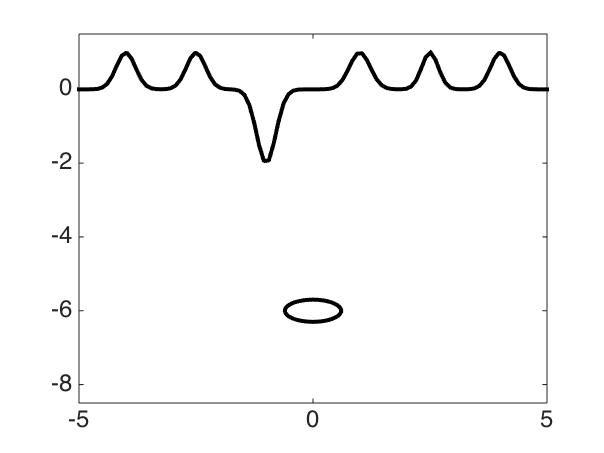}
}
\subfigure[No noise]{\includegraphics[width=0.45\textwidth]{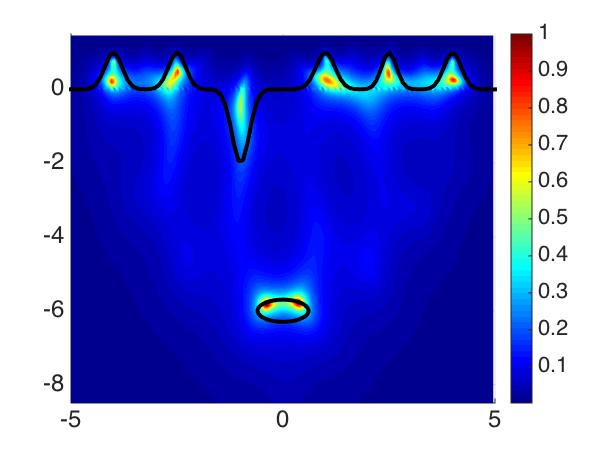}
}
\caption{Reconstructions of three different cases with exact data. Picture (b) presents the reconstruction 
for $\G$ with two local perturbations and an ellipse shape obstacle, given by (\ref{d9}) and (\ref{d12}).Picture (d) presents the reconstruction 
for $\G$ with four local perturbations and an ellipse shape obstacle, given by (\ref{d10}) and (\ref{d12}).Picture (f) presents the reconstruction 
for $\G$ with six local perturbations and an ellipse shape obstacle, given by (\ref{d11}) and (\ref{d12}). }\label{f5} 
\end{figure*}

From the above numerical experiments, it can be observed that the sampling method proposed in 
Theorem \ref{theorem3} can provide satisfactory reconstructions for simultaneously recovering locally 
rough interfaces and the buried obstacles at different noise levels. In addition, it is easily observed 
that the quality of the reconstruction depends on the relative location and distance between the local 
perturbations of the interface and the buried obstacle which possibly corresponds to the different 
strengths of multiple scattering.

\section{Conclusions}\label{sec6}

In this paper, we proposed a novel sampling-type method to simultaneously reconstruct both the local perturbation
of the rough interface and the obstacles buried in the lower half-space from the near-field measurements above
the interface. The idea is mainly based on constructing a modified near-field equation via transferring the 
original scattering problem into the one by an inhomogeneous medium of compact support and the buried obstacles.
Numerical results demonstrated that our inversion algorithm can give satisfactory reconstructions for a variety 
of locally rough interfaces and buried obstacles. Further, the reconstruction can also be regarded as
a good initial guess for an iterative type method in order to obtain an accurate numerical reconstruction 
of the interface and buried obstacles. As far as we know, this is the first sampling-type method to reconstruct 
both the locally rough interface and the buried obstacles simultaneously.  
We remark that it remains open to develop a sampling-type method to recover a nonlocal perturbation of a plane surface. 
We hope to report the progress on this topic in the future.

\section*{Acknowledgments}

This work was supported by the NNSF of China grant 11771349.

\setcounter{equation}{0}

\end{document}